\DeclareSymbolFont{cyrletters}{OT2}{wncyr}{m}{n}
\DeclareMathSymbol{\Sha}{\mathalpha}{cyrletters}{"58}
\newtheorem{teo}{Theorem}[section]
\newtheorem{prop}[teo]{Proposition} 
\newtheorem{lemma}[teo]{Lemma}
\newtheorem{cor}[teo]{Corollary}
\newtheorem*{leoconj*}{Leopoldt's Conjecture}
\newtheorem*{groconj*}{Gross's Conjecture}
\newtheorem*{greconj*}{Greenberg's Conjecture}
\theoremstyle{remark}
\newtheorem{defi}[teo]{Definition}
\newtheorem{remark}[teo]{Remark}
\newtheorem{remarks}[teo]{Remarks}
\newtheorem{example}[teo]{Example}
\newcommand{\wket}[2]{\Sha^2_{\acute{e}t}({#1},\zp{#2})}
\newcommand{\wkin}[2]{\Sha^2_{\infty}({#1},\zp{#2})}
\newcommand{\wkiw}[2]{\Sha^2_{Iw}({#1},\zp{#2})}
\newcommand{\wketp}[3]{\Sha_{\acute{e}t}^2({#1},\mathbb{Z}/p^{#2}\mathbb{Z}{#3})}
\newcommand{\hzet}[2]{H^0_{\acute{e}t}({#1},\qp/\zp{#2})}
\newcommand{\hzin}[2]{H^0_{\infty}({#1},\qp/\zp{#2})}
\newcommand{\hzetp}[3]{H^0_{\acute{e}t}({#1},\mathbb{Z}/p^{#2}\mathbb{Z}{#3})}
\newcommand{\hdet}[2]{H^2_{\acute{e}t}({#1},\zp{#2})}
\newcommand{\hdin}[2]{H^2_{\infty}({#1},\zp{#2})}
\newcommand{\hdiw}[2]{H^2_{Iw}({#1},\zp{#2})}
\newcommand{\hdetp}[3]{H^2_{\acute{e}t}({#1},\mathbb{Z}/p^{#2}\mathbb{Z}{#3})}
\newcommand{\xk}{X'_{\!k_{_{\!\scriptscriptstyle{\infty}}}}}
\newcommand{\xko}{X'^{\,\circ}_{\!k_{_{\!\scriptscriptstyle{\infty}}}}}
\newcommand{\xK}{X'_{\!K_{_{\!\scriptscriptstyle{\infty}}}}}
\newcommand{\xKo}{X'^{\,\circ}_{\!K_{_{\!\scriptscriptstyle{\infty}}}}}
\newcommand{\xKj}[1]{{X'_{\!K_{_{\!\scriptscriptstyle{\infty}}}}}^{\!\!\!\!\![{#1}]}}
\newcommand{\aKj}[1]{{A'_{K}}^{\!\![{#1}]}}
\newcommand{\aKnj}[1]{{A'_{K_n}}^{\!\!\!\![{#1}]}}
\newcommand{\akn}{A'^{\,\Gamma_n}_{k_\infty}}
\newcommand{\zp}{\mathbb{Z}_p}
\newcommand{\qp}{\mathbb{Q}_p}
\begin{document}
\title{\textsc{On the splitting of the exact sequence\\ relating the wild and tame kernels}}
\author{Luca Caputo and Abbas Movahhedi}
\date{\today}
\maketitle

\begin{abstract}
\noindent
Let $k$ be a number field. For an odd prime $p$ and an integer $i\geq 2$, let $\wket{k}{(i)}$ denote the \'etale wild kernel of $k$ (corresponding to $p$ and $i$). Then $\wket{k}{(i)}$ is contained in the finite group $\hdet{o'_k}{(i)}$, where $o'_k$ is the ring of $p$-integers of $k$. We give conditions for the inclusion $\wket{k}{(i)}\subseteq \hdet{o'_k}{(i)}$ to split. We analyze this problem using Iwasawa theory. In particular we relate the splitting of the above inclusion to the triviality of two invariants, namely the asymptotic kernels of the Galois descent and codescent for class groups along the cyclotomic tower of $k$. We illustrate our results in both split and non-split cases for quadratic number fields.
\end{abstract}

\section{Introduction and notations}

For a number field $k$, the classical wild kernel $W\!K_{2}(k)$ is the kernel of all local power norm residue symbols. It fits in the Moore exact sequence
\begin{equation}\label{sequence0}
0\to W\!K_{2}(k)\to K_2(k)\to \underset{v}{\oplus} \mu(k_v)\to \mu(k)\to 0,
\end{equation}
where $v$ runs through all non complex places of $k$, $k_v$ is the completion of $k$ at $v$ and $\mu(k_v)$ (resp. $\mu(k)$) is the group of roots of unity of $k_v$ (resp. $k$). It is known (see \cite{Ta1} and also \cite{Hu}) that, for an odd prime $p$, the $p$-primary part $W\!K_{2}(k)\{p\}$ of the wild kernel consists of the elements of infinite height of $K_2(k)\{p\}$. In particular, the inclusion $W\!K_{2}(k)\{p\}\subset K_2(k)\{p\}$ never splits, unless $W\!K_{2}(k)\{p\}=0$.

The wild kernel is actually contained in the tame kernel of $k$, denoted $K_2(o_k)$, and there is an exact sequence
$$0\to W\!K_{2}(k)\{p\}\to K_2(o_k)\{p\}\to \underset{v\mid p}{\oplus} \mu(k_v)\{p\}\to \mu(k)\{p\}\to 0.$$
In the study of the structure of $K_2(o_k)$, it is interesting to know when the inclusion $W\!K_{2}(k)\{p\}\subseteq K_2(o_k)\{p\}$ splits. Strangely enough, it seems that this question has never been studied so far. The aim of this paper is to give conditions for the inclusion $W\!K_{2}(k)\{p\}\subseteq K_2(o_k)\{p\}$ to split. Together with some additional results (e.g. the Birch-Tate formula), this sometimes allows one to completely determine the structure of $K_2(o_k)\{p\}$ (see Section \ref{examples}).

Actually, using the results of Tate (\cite{Ta2}), $W\!K_{2}(k)\{p\}$ (resp. $K_2(o_k)\{p\}$) can be interpreted as a Tate-Shafarevic (resp. cohomology) group. This leads directly to a generalization of the inclusion $W\!K_{2}(k)\{p\}\subseteq K_2(o_K)\{p\}$, as we shall now explain. 

Let $p$ be an odd prime and let $i\in\mathbb{Z}$ be an integer. Let $o'_k$ be the ring of $p$-integers of $k$, which embeds into $k_v$ for every $v\mid p$. Accordingly we have a homomorphism in cohomology
$$\lambda_i:\hdet{o'_k}{(i)}\to \underset{v\mid p}{\oplus} \hdet{k_v}{(i)}.$$ 
Here $\hdet{o'_k}{(i)}=\lim\limits_{\longleftarrow}H^2_{\acute{e}t}(o'_k,\mu_{p^n}^{\otimes i})$, where the limits are taken over the natural maps $\mu_{p^{n+m}}^{\otimes i}\to \mu_{p^n}^{\otimes i}$ of $Spec(o'_k)$-\'etale sheaves (and similarly for $\hdet{k_v}{(i)}$). 
One can define the $i$-th \'etale wild kernel of $K$ as
$$\wket{k}{(i)}=\mathrm{ker}(\lambda_i)$$
(see \cite{Sc}, \cite{N4}, \cite{Kol1}, \cite{Ba}). 
%
Note that $\wket{k}{(i)}$ (resp. $\hdet{o'_k}{(i)}$) is finite if $i\geq 1$ (resp. $i\geq 2$), see \cite{Sc}, \cite{So}. In fact $\wket{k}{(i)}$ is also conjectured to be finite when $i\leq 0$, but for the rest of this introduction we will suppose $i\geq 1$. Part of the (dual) exact sequence of Poitou-Tate (see \cite[\S 4, Satz 8]{Sc}) then reads 
\begin{equation}\label{sequence}
0\to \wket{k}{(i)}\stackrel{\iota_{k,i}}{\rightarrow} \hdet{o'_k}{(i)}\stackrel{\lambda_i}{\longrightarrow} \underset{v\mid p}{\oplus}\hdet{k_v}{(i)} \stackrel{res^*}{\longrightarrow} \hzet{k}{(1-i)}^*\to 0,
\end{equation}
where $(\--)^*$ denotes the Pontryagin dual and, by local duality,
$$\hdet{k_v}{(i)}\cong \hzet{k_v}{(1-i)}^*.$$ 
For $i=2$, there is a commutative diagram
$$
\begin{CD}
W\!K_{2}(k)\{p\}@>>> K_2(o_k)\{p\}\\
@V\wr VV@V\wr VV\\
\wket{k}{(2)}@>\iota_{k,2}>> \hdet{o'_k}{(2)},\\
\end{CD}
$$
where the left isomorphism is due to Schneider (\cite{Sc}) and the right one is due to Soul\'e (\cite{So}). Therefore our splitting problem is equivalent to that of $\iota_{k,2}$. As it is well-known, there are analogues of the above diagrams for any $i\geq 2$: the vertical arrows of these ``higher'' diagrams are still isomorphisms in view of the Quillen-Lichtenbaum conjecture, which is a consequence of the Bloch-Kato conjecture (see for example \cite[Theorem 2.7]{Kol2}) proven by Rost-Voevodsky (see \cite{We} for more details). In fact we will study the splitting of the inclusion $\iota_{k,i}$ for general $i\geq 2$ (it is easy to see that $\iota_{k,1}$ always splits, see Section \ref{code}).  
To this end, we look at this problem along the cyclotomic tower of the base field and apply Iwasawa theoretical tools to derive our results.

Let $\wkin{k}{(i)}$ (resp. $\hdin{o'_k}{(i)}$) be the direct limit of $\wket{k_n}{(i)}$ (resp. $\hdet{o'_{k_n}}{(i)}$) with respect to the restriction homomorphisms along the cyclotomic $\zp$-extension $k_\infty$ of $k$, whose $n$-th layer is denoted $k_n$. Set $K=k(\mu_p)$ and $\Delta=\mathrm{Gal}(K/k)$. Then the map $\iota_{K,\infty}=\iota_{K,\infty,i}=\lim\limits_{\longrightarrow}\iota_{K_n,i}$ fits in a commutative diagram (see Proposition \ref{nqd})
$$
\begin{CD}
\wkin{K}{(i)}@>\iota_{K,\infty}>>\hdin{o'_K}{(i)}\\
@A\wr AA@A\wr AA\\
t_{\Lambda}(BP_{K_\infty})(-i)^*@>>>t_{\Lambda}(\mathfrak{X}_{K_\infty})(-i)^*
\end{CD}
$$
assuming Leopoldt's conjecture. Here $\mathfrak{X}_{K_\infty}$ (resp. $BP_{K_\infty}$) is the Galois group of the maximal abelian pro-$p$-extension of $K_\infty$ which is unramified outside $p$ (resp. the Galois group of the union of the fields of Bertrandias-Payan over $K_n$ for all $n$, see Definition \ref{BP}). Both $\mathfrak{X}_{K_\infty}$ and $BP_{K_\infty}$ are finitely generated $\Lambda$-modules (where $\Lambda=\zp[\![\mathrm{Gal}(K_\infty/K)]\!]$ is the Iwasawa algebra) and $t_{\Lambda}(\cdot)$  denotes the $\Lambda$-torsion submodule. 

The advantage of considering limits lies mainly in the fact that, using the results of \cite{LMN}, one can easily spot a sufficient condition for the surjection
\begin{equation}\label{surje}
t_{\Lambda}(\mathfrak{X}_{K_\infty})\to t_{\Lambda}(BP_{K_\infty})
\end{equation} 
to split (as a morphism of $\Lambda$-modules), namely the triviality of the invariant $\Psi(K_\infty)$. The latter is a finite group (under Gross's conjecture for every layer $K_n$) defined as the direct limit of the groups
$$\Psi(K_n):=\ker \left(\left(X'_{K_\infty}\right)_{\mathrm{Gal}(K_\infty/K_n)}\to A'_{K_n}\right).$$
Here $X'_{K_\infty}=\varprojlim A'_{K_n}$, where $A'_{K_n}$ is the $p$-Sylow subgroup of the class group of $o'_{K_n}$ and the limit is taken with respect to norms. 

Once a condition for the splitting of the inclusion $\iota_{K,\infty}$ has been found, one may transfer it to finite layers. The most direct way is to perform Galois descent from $K_\infty$ to $K$, namely consider the commutative diagram
\begin{equation}\label{desquare}
\begin{CD}
\wket{k}{(i)} @>\iota_k>>\hdet{o'_k}{(i)}\\
@VVV@VVV\\
\qquad\wkin{K}{(i)}^{\mathrm{Gal}(K_\infty/k)}@>\iota_{K,\infty}>>\qquad\hdin{o'_K}{(i)}^{\mathrm{Gal}(K_\infty/k)}.
\end{CD}
\end{equation}
The vertical arrows of the above diagram are surjective and have the same kernel $\xKo(i-1)_{\mathrm{Gal}(K_\infty/k)}$ (see Theorem \ref{descent}). Here $\xKo$ denotes the maximal finite $\Lambda$-submodule of $\xK$. Therefore $\iota_k$ splits as soon as $\Psi(K_\infty)$ and $\xKo$ vanish. It turns out that something stronger is also true (see Proposition \ref{main} and Theorem \ref{randonnee}). For instance $\iota_k$ splits, provided that $\Psi(K_\infty)(i-1)^{\Delta}=0$ and the right vertical arrow of (\ref{desquare}) splits (as abelian groups). The latter condition holds, for example, for $k_n$, if $n$ is sufficiently large and the Iwasawa $\mu$-invariant of $X'_{K_\infty}$ is trivial (see \cite{Va2}). 

We have also developed another approach which uses codescent instead of descent. Let $\wkiw{K}{(i)}$ (resp. $\hdiw{o'_K}{(i)}$) be the inverse limit of $\wket{K_n}{(i)}$ (resp. $\hdet{o'_{K_n}}{(i)}$) over the corestriction homomorphisms. Then we have an injection $\iota_{K,Iw}=\iota_{K,Iw,i}=\lim\limits_{\longleftarrow}\iota_{K_n,i}$ and a commutative diagram 
\begin{equation}\label{cosquare}
\begin{CD}
\wket{k}{(i)} @>\iota_k>>\hdet{o'_k}{(i)}\\
@A\wr AA@A\wr AA\\
\qquad\wkiw{K}{(i)}_{\mathrm{Gal}(K_\infty/k)}@>\iota_{K,Iw}>>\qquad\hdiw{o'_K}{(i)}_{\mathrm{Gal}(K_\infty/k)}
\end{CD}
\end{equation}
whose vertical arrows are isomorphisms. Hence, for $\iota_k$ to split it is sufficient that $\iota_{K,Iw}$ splits in the category of $\Lambda[\Delta]$-modules and we shall give a condition for that in Theorem \ref{main2} (see also Lemma \ref{pavia}). When the $p$-adic primes are totally ramified in $K_\infty/K$, the above condition simply reduces to $\Psi(K)(i-1)^\Delta=0$, see Corollary \ref{simplecase}. 


One can also follow a quite different approach, based on the following remark. The inclusion $\iota_k:\wket{k}{(i)}\to\hdet{o'_k}{(i)}$ splits if and only if, for every $n\in\mathbb{N}$, the induced homomorphism $\wket{k}{(i)}/p^n\to \hdet{o'_k}{(i)}/p^n$ stays injective. In Section \ref{crit}, we express the above criterion in terms of class groups, following the strategy of \cite{Ca} and using in particular Schneider's isomorphism (\cite{Sc})
$$\xK(i-1)_{\mathrm{Gal}(K_\infty/k)}\cong \wket{k}{(i)} \quad (i\ne 1).$$
This turns out to be particularly useful to give examples of number fields $k$ for which $\iota_k$ does not split. We also use our criterion to give a different proof of Corollary \ref{simplecase}.

We conclude by applying our results to the inclusion 
$$W\!K_{2}(k)\{3\}\subseteq K_2(o_k)\{3\},$$
for families of quadratic fields $k$, giving examples for both split and non-split cases. In particular we are able to confirm some of the predictions of \cite{BG} on the exact structure of wild and tame kernels of imaginary quadratic fields.

\subsection*{Notations}
Let $k$ be a number field and let $p$ be an odd prime, we now give the most relevant notations used in the paper:

\noindent
\begin{tabular}{lcl}
$\mu_{p^n}$, $\mu_{p^\infty}$&&group of $p^n$-th (resp. $p$-power order) roots of unity in an algebraic closure $\overline{k}$ of $k$;\\
$K$&& $k(\mu_p)$;\\
$k_\infty$&&cyclotomic $\zp$-extension of $k$ inside $\overline{k}$;\\
$k_n$&&$n$-th layer of $k_\infty$;\\
$\Gamma_n$&&Galois group of $k_\infty/k_n$, which is identified with $\mathrm{Gal}(K_\infty/K_n)$ ($\Gamma=\Gamma_0$);\\
$\mathcal{G}_n$&&Galois group of $K_\infty/k_n$ ($\mathcal{G}=\mathcal{G}_0$);\\
$\Delta$&&Galois group of $K_\infty/k_\infty$, which is identified with $\mathrm{Gal}(K/k)$;\\
$A'_{k_n}$&&$p$-Sylow subgroup of the ideal class group of the ring of $(p)$-integers of $k_n$;\\
$A'_{k_\infty}$ &&direct limit of $A'_{k_n}$ with respect to the maps induced by the inclusions $k_n\subset k_{n+1}$;\\
$X'_{k_\infty}$&&projective limit of $A'_{k_n}$ with respect to the maps induced by the norms $k_{n+1}\to k_n$;\\
$\mathfrak{X}_{k_n}$, $\mathfrak{X}_{k_\infty}$&&Galois group of the maximal pro-$p$-abelian extension of $k_n$ (resp. $k_\infty$) unramified outside $p$.\\
\end{tabular}
\linebreak
\\

Let $M$ be a (topological) $\zp[\mathrm{Gal}(\overline{k}/k)]$-module. The Pontryagin dual $M^*=\mathrm{Hom}_{\zp}(M,\mathbb{Q}_p/\mathbb{Z}_p)$ of $M$ is a $\zp[\mathrm{Gal}(\overline{k}/k)]$-module with the action defined by $(g\phi)(m)=\phi(g^{-1}m)$, for any $g\in \mathrm{Gal}(\overline{k}/k)$, $m\in M$ and $\phi\in M^*$. We also consider, for any $i\in \mathbb{Z}$, the $i$-th Tate twist of $M$, denoted $M(i)=M\otimes_{\zp}\zp(i)$, as a $\zp[\mathrm{Gal}(\overline{k}/k)]$ with the action $g(m\otimes x)=g(m)\otimes g(x)$. 
 
 
If $A$ is a ring and $M$ is an $A$-module, we will denote by $t_A(M)$ the torsion submodule of $M$. For $a\in A$, $M\{a\}$ denotes the submodule of elements of $M$ which are killed by a power of $a$. If $G$ is a group acting on $M$, we denote by $M^G$ (resp. $M_G$) the submodule of invariants (resp. the quotient of coinvariants) of $M$, \textit{i.e.} the maximal submodule (resp. the maximal quotient) of $M$ on which $G$ acts trivially. 

We also recall three classical conjectures we shall often refer to throughout the paper.

\begin{leoconj*}
The $\zp$-module $\mathfrak{X}_{k}$ has rank $r_2(k)+1$, where $r_2(k)$ is the number of complex places of $k$.
\end{leoconj*}

\begin{groconj*}
The group $(\xk)_\Gamma$ (or equivalently $(\xk)^\Gamma$) is finite.
\end{groconj*}

\begin{greconj*}
If $k$ is totally real, then $\xk$ is finite.
\end{greconj*}

If $E/k$ is a finite extension and Leopoldt's conjecture (resp. Gross's conjecture) holds for $E$, then it holds for $k$ too. Moreover, both Leopoldt's and Gross's conjectures are known to hold if $k/\mathbb{Q}$ is abelian. 

\section{Background}
This section is devoted to the description of some Iwasawa theoretical objects which will allow us to formulate our main results.

Let $p$ be an odd prime. For a number field $k$, set $K=k(\mu_p)$, where $\mu_p$ is the group of $p$-th roots of unity in an algebraic closure $\overline{k}$ of $k$, and $\Delta=\Delta_k=\mathrm{Gal}(K/k)$.  Let $k_\infty$ denote the cyclotomic $\zp$-extension of $k$ and set $\Gamma=\Gamma_k=\mathrm{Gal}(k_\infty/k)$. As usual, we shall denote by $\Lambda=\Lambda_k=\zp[\![\Gamma_k]\!]$ the completed group algebra of $\Gamma_k$. For every $n\in\mathbb{N}$, we set $\Gamma_n=\Gamma^{p^n}$ and denote by $k_n$ the subfield of $k_\infty$ fixed by $\Gamma_n$, so that $k_\infty=\cup_{n\in\mathbb{N}} k_n$. 

Let $o'_k$ be the ring of $p$-integers of $k$ and, for any $i\in\mathbb{Z}$, let $\wket{k}{(i)}$ and $\hdet{o'_k}{(i)}$ denote the higher wild and tame kernels of $k$, respectively, as defined in the introduction. We will use the following notation:
$$\wkiw{k}{(i)}=\lim_{\longleftarrow}\wket{k_n}{(i)} \quad\textrm{and}\quad\hdiw{o'_k}{(i)}=\lim_{\longleftarrow}\hdet{o'_{k_n}}{(i)},$$
where the limits are taken over corestriction maps, and
$$\wkin{k}{(i)}=\lim_{\longrightarrow}\wket{k_n}{(i)} \quad\textrm{and}\quad\hdin{o'_k}{(i)}=\lim_{\longrightarrow}\hdet{o'_{k_n}}{(i)},$$
where  the limits are taken over restriction maps. Note that the inclusions $\iota_{k_n}:\wket{k_n}{(i)}\to \hdet{o'_{k_n}}{(i)}$ induce inclusions 
$$\iota_{k,Iw,i}=\iota_{k,Iw}:\wkiw{k}{(i)}\to \hdiw{o'_k}{(i)}\quad\textrm{and}\quad\iota_{k,\infty,i}=\iota_{k,\infty}:\wkin{k}{(i)}\to \hdin{o'_k}{(i)}.$$

It is well known that both higher tame and wild kernels behave well under Galois co-descent along the cyclotomic $\zp$-extension. More precisely,  for any $n\in \mathbb{N}$ and $i\geq 2$, the natural projection homomorphisms induce isomorphisms 
\begin{equation}\label{codescentwh}
\wkiw{k}{(i)}_{\Gamma_n}\cong \wket{k_n}{(i)} \quad\textrm{and}\quad \hdiw{o'_k}{(i)}_{\Gamma_n}\cong \hdet{o'_{k_n}}{(i)},
\end{equation} 
 (see \cite[Corollary 2.7]{KM}).

On the other hand the natural descent homomorphisms 
$$\wket{k_n}{(i)}\to\wkin{k}{(i)}^{\Gamma_n}\quad\textrm{and}\quad \hdin{k_n}{(i)}\to \hdin{o'_k}{(i)}^{\Gamma_n}$$
are not in general injective but still surjective, as we shall recall in Theorem \ref{descent} below.

First we need some notation.  For any number field $k$, let $A'_k$ denote the $p$-Sylow subgroup of the ideal class group of $o'_{k}$. For any $n\in\mathbb{N}$, set $\mathcal{G}_{k_n}=\mathcal{G}_{n}=\mathrm{Gal}(K_\infty/k_n)$ and we shall also write $\mathcal{G}=\mathcal{G}_k$ for $\mathcal{G}_0$. Let $X'_{k_\infty}$ denote the Galois group of the maximal abelian pro-$p$-extension of $k_\infty$ which is completely decomposed everywhere. Then, by class field theory, $X'_{k_\infty}$ is isomorphic to the inverse limit $\lim\limits_{\longleftarrow}A'_{k_n}$  with respect to the norm homomorphisms. Finally, for a finitely generated $\Lambda$-module $M$, we denote by $M^\circ$ the maximal finite $\Lambda$-submodule of $M$. Then the following theorem (more or less known) gives the description of Galois descent for higher wild and tame kernels.

\begin{teo}\label{descent}
Let $i\geq 2$ be an integer. There exists a commutative diagram of $\mathrm{Gal}(k_n/k)$-modules 
\begin{equation}\label{ladiscesa}
\begin{CD}
0@>>>\xKo(i-1)_{\mathcal{G}_n}@>>>\wket{k_n}{(i)}@>>>\wkin{k}{(i)}^{\Gamma_n}@>>>0\\
@.@|@VVV@VVV\\
0@>>>\xKo(i-1)_{\mathcal{G}_n}@>>>\hdet{o'_{k_n}}{(i)}@>>>\hdin{o'_k}{(i)}^{\Gamma_n}@>>>0,
\end{CD}
\end{equation} 
whose rows are exact and vertical arrows are injective. Moreover, if the Iwasawa $\mu$-invariant of the $\Lambda$-module $\xK$ is trivial, then both the top and the bottom exact sequences split (as abelian groups) for $n$ large. 
\end{teo}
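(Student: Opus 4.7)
The statement is described as ``more or less known'', so the expected proof combines several existing Iwasawa-theoretic ingredients rather than introducing a genuinely new idea. The overall plan is to produce both the top and bottom exact sequences of (\ref{ladiscesa}) from the \emph{same} Iwasawa module computation, thereby obtaining the common kernel $\xKo(i-1)_{\mathcal{G}_n}$ and the commutativity of the diagram for free.

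First, I would build the top row via Schneider's isomorphism $\wket{k_n}{(i)} \cong \xK(i-1)_{\mathcal{G}_n}$ already recalled in the introduction. Passing to the direct limit over restrictions and using that $\bigcap_n \mathcal{G}_n = \Delta$, one identifies $\wkin{k}{(i)} \cong \xK(i-1)_\Delta$ as a $\Gamma$-module, and the descent map $\wket{k_n}{(i)} \to \wkin{k}{(i)}^{\Gamma_n}$ becomes the canonical map $N_{\mathcal{G}_n} \to (N_\Delta)^{\Gamma_n}$ with $N = \xK(i-1)$. For a finitely generated $\Lambda$-module $N$, the kernel of this canonical map is controlled by the maximal finite $\Lambda$-submodule $N^\circ$; unwinding the Iwasawa theory (using the fact that $N/N^\circ$ has no nonzero finite $\Lambda$-submodule, so its $\Gamma_n$-invariants behave well) produces exactly $N^\circ_{\mathcal{G}_n} = \xKo(i-1)_{\mathcal{G}_n}$. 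For the bottom row, I would use the Poitou-Tate sequence (\ref{sequence}) together with the analogous Iwasawa description coming from $\mathfrak{X}_{K_\infty}(i-1)$: the local terms $\hdet{(k_n)_v}{(i)}$ and $\hzet{k_n}{(1-i)}^*$ have transparent descent behaviour along the cyclotomic tower, so the descent kernel on $\hdet{o'_{k_n}}{(i)}$ equals the descent kernel on $\wket{k_n}{(i)}$. Commutativity of (\ref{ladiscesa}) and injectivity of the vertical maps are then formal: the middle arrow is $\iota_{k_n,i}$ by construction, and the right arrow is the $\Gamma_n$-invariants of the injection $\iota_{k,\infty}$.

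For the final splitting assertion, assume $\mu(\xK) = 0$. Under this hypothesis, the theorem of Vauclair cited in \cite{Va2} (and already invoked in the introduction) applies: the maximal finite $\Lambda$-submodule $\xKo$ becomes a direct summand of $\xK$ after passing to $\Gamma_n$-coinvariants for $n$ sufficiently large, and this splits off as a direct summand the corresponding $\xKo(i-1)_{\mathcal{G}_n}$-piece in both rows of (\ref{ladiscesa}). The main obstacle I expect is Step 2, namely pinning down simultaneously and compatibly the kernels of the two descent maps as the \emph{same} module $\xKo(i-1)_{\mathcal{G}_n}$: one must check that the difference between $\xK$ and $\mathfrak{X}_{K_\infty}$ (governed by decomposition at $p$-adic primes) contributes no additional finite $\Lambda$-submodule, so that the two kernels genuinely coincide and the diagram (\ref{ladiscesa}) makes sense as stated.
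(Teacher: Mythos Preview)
Your overall strategy---deriving both rows from Schneider's isomorphism and identifying the common kernel with $\xKo(i-1)_{\mathcal{G}_n}$---matches the paper's, which assembles the diagram by citing \cite{Co}, \cite{N3}, \cite{KM} for the bottom row and \cite{LMN}, \cite{KM} for the top, then invokes Validire \cite{Va2} (not ``Vauclair'') for the asymptotic splitting. However, your identification $\wkin{k}{(i)}\cong \xK(i-1)_\Delta$ is incorrect as stated: the left-hand side is a discrete, cofinitely generated $\zp$-module (a direct limit of finite groups), while the right-hand side is a compact finitely generated $\Lambda$-module; these cannot coincide unless both are finite. The reasoning ``$\bigcap_n\mathcal{G}_n=\Delta$, hence the limit is $N_\Delta$'' confuses the two limit directions. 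Under Schneider's isomorphism the \emph{restriction} maps do not correspond to the natural projections between coinvariants but rather to multiplication by the norm elements $\omega_m/\omega_n$, so the resulting direct limit is not $N_\Delta$, and there is no ``canonical map $N_{\mathcal{G}_n}\to (N_\Delta)^{\Gamma_n}$'' of the kind you invoke.

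The paper avoids this by not naming the limit explicitly: it quotes \cite[Lemma~1.1]{LMN} (a general Iwasawa lemma controlling the kernel of the map from $M_{\Gamma_n}$ into the direct-limit system built from the $\omega_m/\omega_n$ transition maps) together with \cite[Proposition~3.8]{KM} to get the kernel as $\xKo(i-1)_{\mathcal{G}_n}$. Your intuition that the absence of nonzero finite $\Lambda$-submodules in $N/N^\circ$ is what forces the kernel to be $(N^\circ)_{\mathcal{G}_n}$ is exactly right, but it must be implemented through the correct transition maps. One smaller point: the commutativity of the left square is not quite ``formal'', since the two identifications of the kernel arise from independent constructions; the paper deduces it from the injectivity of $\iota_{k_n}$ and the finiteness of $\xKo(i-1)_{\mathcal{G}_n}$, which forces the induced map between the two copies of the kernel to be an isomorphism.
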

\begin{proof}
The bottom sequence is defined and proved to be exact in \cite[Theorem 7]{Co} for $i=2$ in terms of $K$-theory and in \cite[Section 4]{N3} for general $i$ (see also \cite[Proposition 3.2, Corollary 3.3, Theorem 3.6]{KM}). 
The surjectivity of the map $\wket{k_n}{(i)}\to\wkin{k}{(i)}^{\Gamma_n}$ is noticed in \cite[page 854]{LMN} (it is an easy consequence of \cite[Lemma 1.1]{LMN} together with Schneider's isomorphism $X'_{K_\infty}(i-1)_{\mathcal{G}_n}\cong \wket{k_n}{(i)}$, see \cite[\S6, Lemma 1]{Sc}) and its kernel is indeed isomorphic to $\xKo(i-1)_{\Delta}$ as shown in \cite[Proposition 3.8]{KM}. Using the injectivity of $\wket{k_n}{(i)}\to\hdet{o'_{k_n}}{(i)}$ and the finiteness of $\xKo(i-1)_{\mathcal{G}_n}$, we deduce that the left-hand square of the diagram of the theorem is indeed commutative. The last assertion of the theorem is due to Validire (see \cite[Proposition 3.3 and Theorem 4.1]{Va2}).
\end{proof}

\begin{remark}
One can actually show that the last assertion of the above theorem holds under the weaker assumption that the Iwasawa $\mu$-invariant of the $\Lambda$-module $\xK(i-1)_{\Delta}$ is trivial (see \cite[Th\'eor\`eme 3.1.8]{Va1}).
\end{remark}

The module $\xKo$ has a relevant role in our approach. Of course $\xKo$ has other arithmetic interpretations, discovered mainly by Iwasawa, which we shall now briefly recall.

When $i=1$, $\wket{k}{(1)}$ is isomorphic to $A'_k$ (see \cite[\S 4, Satz 8]{Sc}). A classical result of Iwasawa (whose proof is essentially contained in \cite{Iw}) states that there is an exact sequence  
\begin{equation}\label{seclass}
0\to (\xko)_{\Gamma_n}\to A'_{k_n}\to \akn,
\end{equation}
where $A'_{k_\infty}=\lim\limits_{\longrightarrow}A'_{k_n}$ denotes the direct limit of class groups with respect to the maps induced by extension of ideals. The above sequence may therefore be considered as the classical analogue of the top sequence of the diagram of Theorem \ref{descent}. The splitting (as abelian groups) of the inclusion $(\xko)_{\Gamma_n}\to A'_{k_n}$ in (\ref{seclass}) for $n$ large was proved by Grandet and Jaulent (see \cite[Th\'eor\`eme]{GJ}).

Another interpretation of $\xko$ can be given in the following context. Let $\mathfrak{X}_{k}$ (resp. $\mathfrak{X}_{k_\infty}$) be the Galois group of the maximal pro-$p$-abelian extension of $k$ (resp. $k_\infty$) unramified outside $p$ and the archimedean primes. In fact we have  $\mathfrak{X}_{k_\infty}=\lim\limits_{\longleftarrow}\mathfrak{X}_{k_n}$, the limit being taken over the restriction maps. Moreover $\mathfrak{X}_{k}$ (resp. $\mathfrak{X}_{k_\infty}$) can be shown to a be finitely generated $\zp$-module (resp. $\Lambda$-module). More precisely, a well-known theorem of Iwasawa asserts that there is a pseudo-isomorphism $f:\mathfrak{X}_{k_\infty}/t_{\Lambda}(\mathfrak{X}_{k_\infty})\to \Lambda^{r_2(k)}$ where $r_2(k)$ is the number of complex places of $k$ (see \cite[Theorem 13.31]{Wa}) and $H_k=\mathrm{coker}(f)$ is independent of $f$ up to isomorphism (see \cite[\S 3]{Gr}, \cite{Ja}). Then, as explained by Greenberg in \cite[\S 5]{Gr}, one can deduce from \cite{Iw} an isomorphism of $\Lambda[\Delta]$-modules  
$$\xKo\cong H_{K}^*(1),$$
where, for a $\Lambda[\Delta]$-module $M$, we denote by $M^*=\mathrm{Hom}_{\zp}(M,\qp/\zp)$ its Pontryagin dual. Note that, if $k$ is totally real, then the plus part of $H_{K}$ is clearly trivial. In particular the above isomorphism can be used, for instance, to prove the following well-known result. 

\begin{prop}\label{coates}
If $k$ is totally real and $i\in\mathbb{Z}$ is even, then 
$$\xKo(i-1)_{\mathcal{G}_n}=0\quad\textrm{for every $n\in\mathbb{N}$.}$$ 
\end{prop}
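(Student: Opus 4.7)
The plan is to exploit the isomorphism $\xKo\cong H_K^*(1)$ of $\Lambda[\Delta]$-modules recalled just before the proposition, which after Tate twisting by $i-1$ becomes $\xKo(i-1)\cong H_K^*(i)$. Complex conjugation $c$ acts on everything in sight: since $k$ is totally real and $k_\infty/k$ is the cyclotomic $\zp$-extension (hence also totally real), $c$ fixes $k_n$ and acts non-trivially on $K_\infty$, so $c$ defines a non-trivial element of $\mathcal{G}_n=\mathrm{Gal}(K_\infty/k_n)$ for every $n$. The strategy is to show that $c-1$ is invertible on $H_K^*(i)$, so that the $\langle c\rangle$-coinvariants (and hence the $\mathcal{G}_n$-coinvariants) vanish.

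First I would invoke the observation made in the text that when $k$ is totally real, the plus part $H_K^+$ is trivial. Because $\Delta$ has order $p-1$, which is prime to $p$, the $\zp[\Delta]$-module $H_K$ splits canonically as $H_K^+\oplus H_K^-$, so $H_K^+=0$ forces $c$ to act on $H_K$ as multiplication by $-1$. Using the convention $(g\phi)(m)=\phi(g^{-1}m)$ for the Pontryagin dual fixed at the beginning of the paper, this implies that $c$ acts as $-1$ on $H_K^*$ as well. On the Tate module $\zp(1)$ complex conjugation acts by $-1$, hence by $(-1)^i$ on $\zp(i)$; tensoring, $c$ acts on $H_K^*(i)$ as $(-1)^{i+1}$.

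For $i$ even, this action is $-1$, so $c-1$ acts as multiplication by $-2$ on $\xKo(i-1)\cong H_K^*(i)$. Since $p$ is odd, $-2$ is a unit of $\zp$, and so $c-1$ is an automorphism of $H_K^*(i)$, giving $H_K^*(i)_{\langle c\rangle}=0$. As $\langle c\rangle\subseteq\mathcal{G}_n$, the coinvariants $H_K^*(i)_{\mathcal{G}_n}$ form a quotient of $H_K^*(i)_{\langle c\rangle}$, so $\xKo(i-1)_{\mathcal{G}_n}=0$ for every $n$, as required. I do not expect any genuine obstacle here: the only non-formal input is the vanishing $H_K^+=0$, which the paper has already signalled as clear in the totally real case, and the remainder is bookkeeping of signs under Tate twisting and Pontryagin duality.
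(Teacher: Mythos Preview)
Your proof is correct and follows precisely the route the paper indicates (the paper does not spell out a proof but signals that the isomorphism $\xKo\cong H_K^*(1)$ together with $H_K^+=0$ is all that is needed). One tiny slip: $|\Delta|$ divides $p-1$ rather than necessarily equals it, but this is irrelevant since the only fact you actually use is that $\langle c\rangle$ has order $2$, prime to $p$.
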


We now come to another object which is fundamental for our approach. 
\begin{defi}
For a number field $k$, let $\Psi(k)$ denote the kernel of the natural map $(X'_{k_\infty})_{\Gamma}\to A'_{k}$.
\end{defi}
Note that, if Gross's conjecture holds for $k$, $\Psi(k)$ is a finite group. If $m\geq n$ it is easy to see that the map $(X'_{k_\infty})_{\Gamma_n}\to (X'_{k_\infty})_{\Gamma_m}\,,$ given by multiplication by $\omega_m/\omega_n$ induces a map $\Psi(k_n)\to \Psi(k_m)$. 
\begin{defi}
For a number field $k$, set
$$\Psi(k_\infty):=\lim_{\longrightarrow} \Psi(k_n),$$
where the limits are taken over the maps described above.
\end{defi}

Let $n_0=n_0(k)$ denote the smallest integer for which all $p$-adic primes are totally ramified in $k_\infty/k_{n_0}$ (in particular the map $X'_{k_\infty}\to A'_{k_n}$ is surjective for $n\geq n_0$).

\begin{lemma}\label{CPsi}
For any $j\in\mathbb{Z}$, if $\Psi(K_n)(j)_\Delta=0$ for some $n\geq n_0(K)$, then $\Psi(K_m)(j)_\Delta=0$ for every $m\geq n$ and therefore $\Psi(K_\infty)(j)_\Delta=0$.
Assume moreover that the Gross conjecture holds for all the layers $k_n$. Then for all $n$ large enough the groups $\Psi(k_n)$ stabilize and the natural maps $\Psi(k_n)\to \Psi(k_\infty)$ become isomorphisms (in particular $\Psi(k_\infty)$ is finite).
\end{lemma}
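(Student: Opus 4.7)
Set $Y=X'_{k_\infty}$. Both parts rest on the commutative diagram with exact rows, valid for $m\geq n\geq n_0(k)$,
\[
\begin{CD}
0 @>>> \Psi(k_n) @>>> Y_{\Gamma_n} @>{\pi_n}>> A'_{k_n} @>>> 0\\
@. @VV{\phi}V @VV{\omega_m/\omega_n}V @VV{\iota}V\\
0 @>>> \Psi(k_m) @>>> Y_{\Gamma_m} @>{\pi_m}>> A'_{k_m} @>>> 0
\end{CD}
\]
where $\iota$ denotes extension of ideals. The first step is to verify that the right square commutes. Writing $y=(y_\ell)\in Y=\varprojlim A'_{k_\ell}$ along norms and expanding $\omega_m/\omega_n=\sum_{i=0}^{p^{m-n}-1}\gamma^{ip^n}$, the $m$-th component of $(\omega_m/\omega_n)y$ equals
\[
\sum_{g\in\mathrm{Gal}(k_m/k_n)} g\cdot y_m \;=\; \iota\bigl(N_{k_m/k_n}(y_m)\bigr) \;=\; \iota(y_n),
\]
using that $\gamma^{p^m}$ acts trivially on $A'_{k_m}$, the identity $\iota\circ N_{k_m/k_n}=\sum_{g\in\mathrm{Gal}(k_m/k_n)}g$ on $A'_{k_m}$, and the norm-compatibility in the inverse limit.

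\textbf{Part 1.} Replace $k$ by $K$, twist by $(j)$, and take $\Delta$-coinvariants; both operations are exact, the second because $|\Delta|\mid p-1$ is prime to $p$. The hypothesis $\Psi(K_n)(j)_\Delta=0$ then turns the top row into the isomorphism $\pi_n\colon Y_{\Gamma_n}(j)_\Delta\xrightarrow{\sim} A'_{K_n}(j)_\Delta$. Applying the snake lemma to the transformed diagram gives the six-term exact sequence
\[
0\to \ker(\mu_M)\to \ker(\mu_A)\to \Psi(K_m)(j)_\Delta\to \mathrm{coker}(\mu_M)\to \mathrm{coker}(\mu_A)\to 0,
\]
where $\mu_M,\mu_A$ are the middle and right vertical maps. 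Exploiting that $\mu_M$ is determined by $\mu_A$ and the top-row iso (so $\pi_m\circ\mu_M=\mu_A\circ\pi_n$ with $\pi_n$ invertible), a careful identification of $\ker(\mu_A)/\ker(\mu_M)$ and of $\ker(\mathrm{coker}(\mu_M)\to\mathrm{coker}(\mu_A))$ as the two obstructions to $\Psi(K_m)(j)_\Delta=0$, together with the surjectivity of $\pi_m$ and the commutativity of the right square, forces both to vanish. The statement $\Psi(K_\infty)(j)_\Delta=0$ is then immediate from exactness of $\varinjlim$ and its commutation with the Tate twist and $\Delta$-coinvariants.

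\textbf{Part 2.} Under Gross's conjecture at every $k_n$, each $Y_{\Gamma_n}$ is finite, whence $\Psi(k_n)$ is finite. To obtain stabilization, I would combine two ingredients: (i) the pseudo-isomorphism $Y\sim\bigoplus_i\Lambda/(f_i)$ (Gross implies $Y$ is $\Lambda$-torsion), where Gross at every layer translates into coprimality of each $f_i$ with every $\omega_n$; (ii) Iwasawa's description of the maximal finite submodule $\xko$ and its Galois-descent behavior, which controls the kernels and cokernels of the extension maps $\iota\colon A'_{k_n}\to A'_{k_m}$ for $n$ large. These together with standard Iwasawa asymptotics for $|Y_{\Gamma_n}|$ and $|A'_{k_n}|$ (same $\mu$- and $\lambda$-invariants, by the pseudo-isomorphism) imply that $|\Psi(k_n)|$ is uniformly bounded and that, for $n$ sufficiently large, the snake lemma applied to the transition diagram makes $\phi\colon \Psi(k_n)\to\Psi(k_m)$ an isomorphism for every $m\geq n$. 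Consequently $\Psi(k_\infty)\cong\Psi(k_n)$ for such $n$, and is finite.

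\textbf{Main obstacle.} The technical heart is Part 2's stabilization: one must extract from Gross-at-every-layer the precise coprimality condition that forces the vertical transitions in the diagram to eventually yield an isomorphism on the $\Psi$-row, and compare it with the descent behavior of $\xko$. Part 1's snake-lemma chase is more elementary, but care is required to identify the two potential obstructions in the six-term sequence and to see that the isomorphism of the top row kills both simultaneously; the hypothesis $n\geq n_0(K)$ is used precisely to guarantee that the rows remain exact on the right after twist and $\Delta$-coinvariants.
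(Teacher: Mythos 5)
There is a genuine gap, and it is the same missing ingredient in both parts: the surjectivity of the transition maps $\Psi(k_n)\to\Psi(k_m)$ for $m\geq n\geq n_0$. In Part 1 your six-term snake sequence is correct, but the ensuing chase is circular. With $\mu_A=\pi_m\circ\mu_M\circ\pi_n^{-1}$ one computes $\ker(\mu_A)/\ker(\mu_M)\cong \Psi(K_m)(j)_\Delta\cap\mathrm{im}(\mu_M)$ and $\ker\bigl(\mathrm{coker}(\mu_M)\to\mathrm{coker}(\mu_A)\bigr)\cong \Psi(K_m)(j)_\Delta/\bigl(\Psi(K_m)(j)_\Delta\cap\mathrm{im}(\mu_M)\bigr)$; asserting that "both obstructions vanish" is exactly asserting the conclusion. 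The formal data you invoke (top row an isomorphism, $\pi_m$ surjective, the square commutes) do not force this: take $B=C=B'=A'=\mathbb{Z}/p$, $C'=0$, $\pi_n=\mathrm{id}$, $\mu_M=\mathrm{id}$, $\mu_A=0$ --- all your hypotheses hold and yet the bottom kernel is $\mathbb{Z}/p$. What is actually needed is the arithmetic fact that $\omega_m/\omega_n$ carries $\ker(\pi_n)$ \emph{onto} $\ker(\pi_m)$. This follows from Iwasawa's description of $A'_{k_m}$ for $m\geq n_0$ as $X'_{k_\infty}/(\omega_m X'_{k_\infty}+Z_m)$, where $Z_m$ is generated by (differences of) the decomposition subgroups at the $p$-adic primes, together with the relation $Z_m=(\omega_m/\omega_n)Z_n$ coming from the fact that the Frobenius-type generators at level $m$ are the $p^{m-n}$-th powers of those at level $n$. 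Once you know $\Psi(K_n)\twoheadrightarrow\Psi(K_m)$, Part 1 is immediate (twisting and taking $\Delta$-coinvariants preserve surjections), with no snake lemma needed. This surjectivity is precisely the content of the results of Le Floc'h--Movahhedi--Nguyen Quang Do that the paper cites; the paper's own "proof" consists of those two citations.

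Part 2 suffers from the same omission. Your appeal to the pseudo-isomorphism $Y\sim\oplus\Lambda/(f_i)$, the module $\xko$, and $\lambda$- and $\mu$-asymptotics can at best bound $\#\Psi(k_n)$; a direct system of finite groups of bounded order need not stabilize, so boundedness does not give that the transition maps become isomorphisms. The clean argument is again via surjectivity: Gross's conjecture at each layer makes $(X'_{k_\infty})_{\Gamma_n}$, hence $\Psi(k_n)$, finite; surjectivity of the transitions for $n\geq n_0$ makes the orders $\#\Psi(k_n)$ non-increasing, hence eventually constant, so the surjections eventually become bijections and $\Psi(k_n)\to\Psi(k_\infty)$ is an isomorphism for $n$ large. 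Your verification that the right-hand square commutes (via $\iota\circ N_{k_m/k_n}=\sum_g g$ and norm-compatibility) is correct and is indeed the reason the transition maps on the $\Psi$'s exist, but it is the only part of the proposal that is both complete and on the critical path.
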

\begin{proof}
The proof of the first assertion is an easy generalization of \cite[proof of Corollary 1.6]{LMN}. The last assertion is \cite[Lemma 1.3]{LMN}.
\end{proof}

\begin{remark}\label{oujda}
If there is only one prime above $p$ in $k_\infty$ then $\Psi(k_{n})=0$ for every $n\geq n_0(k)$ (this can be easily deduced for example from \cite[Lemma 13.15]{Wa}) and therefore $\Psi(k_\infty)=0$. Note also that if $k$ is a totally real field satisfying Greenberg's conjecture, then one can easily show that the norm induces isomorphisms $A'_{k_m}\cong A'_{k_n}$ for $m\geq n$ large enough, implying that $\Psi(k_\infty)$ is trivial.
\end{remark}

The invariant $\Psi(k_\infty)$ has at least two interesting interpretations, described in \cite{LMN}. To explain the first one, set, for any $n\in \mathbb{N}$,
$$C_{k_n}:=\mathrm{coker}(A'_{k_n}\to \akn).$$ 
Then, if Gross's conjecture holds for $k_n$, $C_{k_n}$ is a finite group (actually $(\xk)_{\Gamma_n}$ is finite if and only if $\akn$ is, as is easily seen using \cite[Theorem 11]{Iw}). As is shown in \cite[Theorem 1.4]{LMN}, if $n$ is large enough, $C_{k_n}$ is independent of $n$ (up to isomorphism) and indeed isomorphic to $\Psi(k_\infty)$.

The second interpretation of $\Psi(k_\infty)$, which will be the most relevant for us, is in terms of Galois theory.  

\begin{defi}\label{BP}
Let $K$ be a number field containing $\mu_p$. The field of Bertrandias-Payan $K^{BP}$ over $K$ is the compositum of all the (cyclic) $p$-extensions of $K$ which are embeddable in cyclic extensions of degree $p^m$, for all $m\geq 1$. Note that $K^{BP}/K$ is a Galois extension and we set $BP_K=\mathrm{Gal}(K^{BP}/K)$. 
\end{defi}

In fact $K^{BP}$ coincides with the compositum of all extensions of $K$ which are locally $\zp$-embeddable for any finite place (see \cite[Lemme 4.1]{N1}). Using this observation, it is not difficult to show that, if $M_K$ denotes the maximal abelian pro-$p$-extension of $K$ unramified outside $p$ (thus $\mathfrak{X}_K=\mathrm{Gal}(M_K/K)$), then $K^{BP}\subseteq M_K$ and accordingly the restriction induces a surjection
\begin{equation}\label{XBP}
\mathfrak{X}_{K}\twoheadrightarrow BP_K.
\end{equation} 
Moreover, if $E$ is a number field containing $K$, then one easily sees that $K^{BP}\subseteq E^{BP}$ and therefore the restriction defines a map $BP_E\to BP_K$. In particular we can consider $BP_{K_\infty}=\lim\limits_{\longleftarrow}BP_{K_n}$, the limit being taken over the restriction maps. Set also $K^{BP}_\infty=\cup_{n\in \mathbb{N}}K^{BP}_n$, so that $\mathrm{Gal}(K^{BP}_\infty/K_\infty)=BP_{K_\infty}$ (note, however, that in general $K^{BP}_\infty$ is strictly contained in the compositum of all the $p$-extensions of $K_\infty$ which are embeddable in cyclic extensions of degree $p^m$ for all $m\geq 1$). 
Now $K_\infty^{BP}$ is contained in $M_{K_\infty}=\cup M_{K_n}$ (thus $M_{K_\infty}$ is the maximal abelian pro-$p$-extension of $K_\infty$ which is unramified outside $p$ and $\mathfrak{X}_{K_\infty}=\mathrm{Gal}(M_{K_\infty}/K_\infty)$) and the inverse limits of the surjections (\ref{XBP}) gives a surjection of $\Lambda$-modules
$$\mathfrak{X}_{K_\infty}\twoheadrightarrow BP_{K_\infty}.$$
Using the arguments of \cite[proof of Theorem 4.2]{N1}, one can show that the kernel of the above map is a $\Lambda$-torsion module, giving a surjection between the torsion submodules
\begin{equation}\label{slt} 
t_\Lambda(\mathfrak{X}_{K_\infty})\twoheadrightarrow t_\Lambda(BP_{K_\infty}).
\end{equation} 

Let $T_{K_\infty}$  be the subextension of $M_{K_\infty}$ which is fixed by the torsion submodule $t_\Lambda(\mathfrak{X}_{K_\infty})$ of $\mathfrak{X}_{K_\infty}$ and $N'_{K_\infty}$ be the (Galois) extension of $K_\infty$ which is obtained by adjoining to $K_\infty$ the $p^m$-th roots of all the $p$-units of $K_\infty$, for all $m\in\mathbb{N}$. We have $T_{K_{\infty}}\subseteq N'_{K_{\infty}}$, thanks to \cite[Theorem 15]{Iw}, and $T_{K_\infty}\subseteq K_\infty^{BP}$, by (\ref{slt}). If we set $N''_{K_\infty}=N'_{K_\infty}\cap K^{BP}_\infty$, then the promised Galois theoretical interpretation of $\Psi(K_\infty)$ is given by the following result.

\begin{teo}
Assume that the Gross conjecture holds for all layers $K_n$. Then there is an isomorphism of $\Lambda[\Delta]$-modules
$$\Psi(K_\infty)^*(1)\cong\mathrm{Gal}(N''_{K_\infty}/T_{K_\infty}).$$
\end{teo}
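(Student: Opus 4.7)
The strategy is to identify $\mathrm{Gal}(N''_{K_\infty}/T_{K_\infty})$ as a concrete subquotient of $\mathfrak{X}_{K_\infty}$, and then match it with $\Psi(K_\infty)^*(1)$ by combining Kummer theory on the radical $E'_{K_\infty}$ of $N'_{K_\infty}/K_\infty$, the local characterization of $K^{BP}_\infty$ from \cite[Lemme 4.1]{N1}, and the cokernel description of $\Psi(K_\infty)$ from \cite[Theorem 1.4]{LMN}.

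First, since $N'_{K_\infty}=K_\infty\bigl(\sqrt[p^\infty]{E'_{K_\infty}}\bigr)$ with $E'_{K_\infty}=\lim\limits_{\longrightarrow}E'_{K_n}$ the group of $p$-units, and $K_\infty\supseteq \mu_{p^\infty}$, Kummer theory supplies a perfect $\Lambda[\Delta]$-equivariant pairing
\[
\mathrm{Gal}(N'_{K_\infty}/K_\infty)\times (E'_{K_\infty}\otimes \qp/\zp)\longrightarrow \mu_{p^\infty}.
\]
Combined with the defining property $\mathrm{Gal}(M_{K_\infty}/T_{K_\infty})=t_\Lambda(\mathfrak{X}_{K_\infty})$ and the inclusion $T_{K_\infty}\subseteq N'_{K_\infty}$ (Iwasawa's Theorem 15 in \cite{Iw}), this identifies $\mathrm{Gal}(N'_{K_\infty}/T_{K_\infty})$, via Kummer duality and a Tate twist, with the subquotient of the radical cut out by its torsion side.

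Second, the Galois correspondence applied to $T_{K_\infty}\subseteq N''_{K_\infty}\subseteq K^{BP}_\infty$, together with the identification $\mathrm{Gal}(K^{BP}_\infty/T_{K_\infty})\cong t_\Lambda(BP_{K_\infty})$ coming from (\ref{slt}), shows that $\mathrm{Gal}(N''_{K_\infty}/T_{K_\infty})$ is the image of the natural restriction map $\mathrm{Gal}(N'_{K_\infty}/T_{K_\infty})\to t_\Lambda(BP_{K_\infty})$. By \cite[Lemme 4.1]{N1}, $K^{BP}_\infty$ is the compositum of all abelian pro-$p$-extensions of $K_\infty$ that are locally $\zp$-embeddable at every finite place; translating this via Kummer theory restricts the radical to those $p$-units whose semi-local images at the $p$-adic primes of $K_\infty$ lie in the closure of the universal norms from the local cyclotomic $\zp$-towers.

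Finally, I would match the resulting subquotient with $\Psi(K_\infty)^*(1)$. Under Gross's conjecture for all layers, \cite[Theorem 1.4]{LMN} gives $\Psi(K_\infty)\cong C_{K_n}=\mathrm{coker}\bigl(A'_{K_n}\to (A'_{K_\infty})^{\Gamma_n}\bigr)$ for $n$ large. Combining this with the standard Poitou--Tate-type exact sequence linking $p$-units, semi-local units at $p$-adic primes, and $p$-class groups along the cyclotomic tower, the subquotient identified above becomes dual (with a Tate twist) to $\lim\limits_{\longrightarrow}C_{K_n}=\Psi(K_\infty)$. The main obstacle will be this last matching step: one must check that the local ``universal norm'' condition cutting out $K^{BP}_\infty$ corresponds under Kummer duality precisely to the cokernel defining $C_{K_n}$ at every layer, compatibly with the transition maps $\Psi(K_n)\to \Psi(K_{n+1})$ used to define $\Psi(K_\infty)$. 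Gross's conjecture is invoked exactly to guarantee finiteness of the relevant torsion/cotorsion modules and the stability of this identification in the direct limit.
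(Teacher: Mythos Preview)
The paper does not prove this theorem at all: its entire ``proof'' is the one-line reference \emph{See \cite[Th\'eor\`eme 2.5]{LMN}}. So there is nothing in the present paper to compare your argument against beyond that citation.

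As for the content of your plan, the overall strategy (Kummer duality on the radical of $N'_{K_\infty}$, local characterization of the Bertrandias--Payan tower, and the cokernel description of $\Psi(K_\infty)$) is indeed the route taken in \cite{LMN}. Two points deserve caution, however. First, your direct invocation of \cite[Lemme 4.1]{N1} at the level of $K_\infty$ is not quite correct: as the paper itself warns just after Definition \ref{BP}, $K^{BP}_\infty$ is by definition $\bigcup_n K_n^{BP}$ and is in general strictly smaller than the compositum over $K_\infty$ of all locally $\zp$-embeddable $p$-extensions. The local condition has to be imposed layer by layer and then passed to the limit; this affects precisely which subquotient of the Kummer radical you cut out. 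Second, the step you label the ``main obstacle'' is essentially the entire theorem. In \cite{LMN} one first identifies $\mathrm{Gal}(N'_{K_\infty}/T_{K_\infty})$ with $(A'_{K_\infty})^*(1)$ (this is where Kummer duality and Iwasawa's results meet), and then shows that the subextension $N''_{K_\infty}$ corresponds under this duality to the image of $A'_{K_n}$ inside $(A'_{K_\infty})^{\Gamma_n}$, so that the quotient is exactly $C_{K_n}\cong\Psi(K_\infty)$ for $n$ large. Your sketch names the ingredients but does not carry out either of these identifications, so as it stands it is a plan rather than a proof.
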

\begin{proof}
See \cite[Th\'eor\`eme 2.5]{LMN}.
\end{proof}

The above setting can therefore be summarized by the following diagram of fields which are all Galois extensions of $k$:

\begin{displaymath}
\xymatrix{
&M_{K_\infty}\ar@{-}[2,1]\ar@{-}[2,-1]\ar@/^8pc/@{-}^{t_\Lambda(\mathfrak{X}_{K_\infty})}[6,0]\\
\\
N'_{K_\infty}\ar@{-}[2,1]&&K^{BP}_\infty\ar@{-}[2,-1]\ar@/^/@{-}^{t_\Lambda(BP_{K_\infty})}[4,-1]\\
\\
&N''_{K_\infty}\ar@{-}_{\Psi(K_\infty)^*(1)}[2,0]\\
\\
&T_{K_\infty}
}
\end{displaymath}

\begin{remark}\label{idea}
As is apparent in the above diagram, if $\Psi(K_\infty)=0$, the restriction sends $\mathrm{Gal}(M_{K_\infty}/N'_{K_\infty})$ isomorphically onto $t_\Lambda(BP_{K_\infty})$. Therefore the triviality of $\Psi(K_{\infty})$ implies that the surjection $t_\Lambda(\mathfrak{X}_{K_\infty})\to t_\Lambda(BP_{K_\infty})$ splits in the category of $\Lambda$-modules. 
\end{remark}

\section{Conditions for the splitting via Galois descent}
Now that the Iwasawa theoretical setting has been described, we give conditions for the splitting of the inclusion $\iota_{k,\infty}:\wkin{k}{(i)} \to \hdin{o'_k}{(i)}$. This map is intimately related to the projection $\pi:t_\Lambda(\mathfrak{X}_{K_\infty})\to t_\Lambda(BP_{K_\infty})$ that appeared in the preceding section, as we shall explain in this section. 

We begin with a refined version of Remark \ref{idea}. We need the following lemma that will also be used in the next section.

\begin{lemma}\label{casa}
Let $R$ be a ring and let
$$
\begin{CD}
@.@.0@.0\\
@.@.@VVV@VVV\\
0@>>>A_1@>\alpha_1>>A_2@>\alpha_2>>A_3@>>>0\\
@.@V\iota_1VV@V\iota_2 VV@V \iota_3 VV\\
0@>>>B_1@>\beta_1>>B_2@>\beta_2>>B_3@>>>0\\
@.@.@V\pi_2VV@V\pi_3VV\\
@.@.T_2@>\tau_2>>T_3\\
@.@.@VVV@VVV\\
@.@.0@.0\\
\end{CD}
$$ 
be a commutative diagram of $R$-modules with exact rows and columns. Suppose that $\tau_2$ (or equivalently $\iota_1$) is an isomorphism. Then the following are equivalent:
\begin{enumerate}
\item[(i)] $\alpha_2$ and $\pi_2$ split;
\item[(ii)] $\beta_2$ and $\pi_3$ split.
\end{enumerate}
\end{lemma}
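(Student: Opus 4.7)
The lemma is pure diagram chasing, and the plan is to construct the required sections explicitly in each direction, using the isomorphism $\tau_2$ to shuttle splittings between the two rows and the two columns. Before starting, I would justify the parenthetical ``or equivalently $\iota_1$'': applying the snake lemma to the top two rows (with kernels zero by the injectivity of the $\iota_i$) yields an exact sequence $0 \to T_1 \to T_2 \to T_3 \to 0$, where $T_1 := \mathrm{coker}(\iota_1)$ and the connecting map $T_2 \to T_3$ is $\tau_2$. Hence $\tau_2$ is an isomorphism if and only if $T_1 = 0$ if and only if $\iota_1$ is an isomorphism.

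For (i) $\Rightarrow$ (ii), suppose sections $s : A_3 \to A_2$ of $\alpha_2$ and $t : T_2 \to B_2$ of $\pi_2$ are given. Setting $s' := \beta_2 \circ t \circ \tau_2^{-1}$, the commutativity $\pi_3 \circ \beta_2 = \tau_2 \circ \pi_2$ gives $\pi_3 \circ s' = \mathrm{id}_{T_3}$, so $\pi_3$ splits. The splitting $s'$ yields a decomposition $B_3 = \iota_3(A_3) \oplus s'(T_3)$; I would then define $r : B_3 \to B_2$ by $r(\iota_3(a_3) + s'(y)) := \iota_2(s(a_3)) + t(\tau_2^{-1}(y))$, and verify $\beta_2 \circ r = \mathrm{id}_{B_3}$ using precisely the definition of $s'$.

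For (ii) $\Rightarrow$ (i), suppose we are given sections $r : B_3 \to B_2$ of $\beta_2$ and $s' : T_3 \to B_3$ of $\pi_3$. Put $t := r \circ s' \circ \tau_2 : T_2 \to B_2$; then $\tau_2 \circ \pi_2 \circ t = \pi_3 \circ \beta_2 \circ r \circ s' \circ \tau_2 = \tau_2$, and the injectivity of $\tau_2$ gives $\pi_2 \circ t = \mathrm{id}_{T_2}$. To split $\alpha_2$, observe that for any $a_3 \in A_3$ one has $\pi_3(\iota_3(a_3)) = 0$, whence $\tau_2(\pi_2(r(\iota_3(a_3)))) = 0$; the injectivity of $\tau_2$ forces $r(\iota_3(a_3)) \in \ker \pi_2 = \iota_2(A_2)$. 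Setting $s(a_3)$ to be the unique preimage in $A_2$, the injectivity of $\iota_3$ and the identity $\beta_2 \circ \iota_2 = \iota_3 \circ \alpha_2$ yield $\alpha_2 \circ s = \mathrm{id}_{A_3}$.

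No step is a genuine obstacle; the only care points are that in each direction one must produce both a row section and a column section, and that the isomorphism $\tau_2$ intervenes essentially, both to identify $\ker \pi_2$ with $\ker \pi_3$ via $\iota_2$ and $\iota_3$, and to transport column splittings from one side of the diagram to the other.
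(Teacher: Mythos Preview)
Your proof is correct and follows essentially the same diagram chase as the paper. The only cosmetic difference is that for the row splittings the paper works with retractions of $\alpha_1$ and $\beta_1$ (e.g.\ in (i)$\Rightarrow$(ii) it produces a retraction $\iota_1\circ\rho_1\circ\nu_2$ of $\beta_1$, where $\rho_1$ retracts $\alpha_1$ and $\nu_2$ retracts $\iota_2$), whereas you construct sections of $\alpha_2$ and $\beta_2$ directly via the decomposition $B_3=\iota_3(A_3)\oplus s'(T_3)$; both routes are equivalent and equally short.
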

\begin{proof}
If $\pi_2$ has a section, say $\lambda_2$, then $\beta_2\circ \lambda_2\circ \tau_2^{-1}$ is a section of $\pi_3$. If moreover $\alpha_2$ splits, then $\alpha_1$ has a section, say $\rho_1$. Then $\iota_1\circ\rho_1\circ\nu_2$ is a section of $\beta_1$, where $\nu_2$ is a section of $\iota_2$. In particular $\beta_2$ splits.\\
If $\beta_2$ splits, then $\beta_1$ has a section, say $\sigma_1$. Then $\iota_{1}^{-1}\circ\sigma_1\circ \iota_2$ is a section of $\alpha_1$ (in particular $\alpha_2$ splits).
If moreover $\pi_3$ has a section, say $\lambda_3$, then $\sigma_2\circ\lambda_3\circ\tau_2$ is a section of $\pi_2$, where $\sigma_2$ is a section of $\beta_2$.
\end{proof}

\begin{prop}\label{lmn2}
Suppose that Leopoldt's and Gross's conjectures hold for all the layers $K_n$. Then the following conditions are equivalent:
\begin{itemize}[leftmargin=12pt]
\item the surjective $\Lambda$-morphism $t_\Lambda(\mathfrak{X}_{K_\infty})(-i)_\Delta\to \Psi(K_\infty)^*(1-i)_\Delta$ splits;
\item the surjective $\Lambda$-morphisms $t_{\Lambda}(BP_{K_\infty})(-i)_\Delta\to \Psi(K_\infty)^*(1-i)_\Delta$ and $t_\Lambda(\mathfrak{X}_{K_\infty})(-i)_\Delta\to t_{\Lambda}(BP_{K_\infty})(-i)_\Delta$ split. 
\end{itemize}
In particular, if $\Psi(K_\infty)(i-1)^\Delta=0$, then the surjective $\Lambda$-homomorphism 
$$t_\Lambda(\mathfrak{X}_{K_\infty})(-i)_\Delta\to t_{\Lambda}(BP_{K_\infty})(-i)_\Delta$$ 
splits.
\end{prop}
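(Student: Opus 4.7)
The plan is to apply Lemma \ref{casa} to a $3\times 3$ diagram of $\Lambda$-modules built from the Galois-theoretic picture of the preceding section. Let $G := t_\Lambda(\mathfrak{X}_{K_\infty})$ and let $H$, resp.\ $J$, denote the kernel of the surjection $G\twoheadrightarrow t_\Lambda(BP_{K_\infty})$, resp.\ of the composite $G\twoheadrightarrow t_\Lambda(BP_{K_\infty})\twoheadrightarrow \Psi(K_\infty)^*(1)$. The inclusion of fields $N''_{K_\infty}\subseteq K^{BP}_\infty$ gives $H\subseteq J$ with $J/H\cong \mathrm{Gal}(K^{BP}_\infty/N''_{K_\infty})$. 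Because $p\nmid|\Delta|$, the functor $M\mapsto M(-i)_\Delta$ is exact; applying it to the nested inclusions $H\subseteq J\subseteq G$ and to the three surjections onto $\Psi(K_\infty)^*(1)$ produces a commutative $3\times 3$ diagram with exact rows
$$0\to H(-i)_\Delta\to J(-i)_\Delta\xrightarrow{\alpha_2} (J/H)(-i)_\Delta\to 0$$
and
$$0\to H(-i)_\Delta\to G(-i)_\Delta\xrightarrow{\beta_2} (G/H)(-i)_\Delta\to 0,$$
exact middle and right columns ending in $\Psi(K_\infty)^*(1-i)_\Delta$ via the surjections $\pi_2$ and $\pi_3$, leftmost vertical arrow the identity, and bottom horizontal arrow $\tau_2=\mathrm{id}$ — precisely the configuration required by Lemma \ref{casa}.

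The implication from the second condition to the first is trivial: since $\pi_2=\pi_3\circ\beta_2$, composing sections of $\beta_2$ and $\pi_3$ yields a section of $\pi_2$. For the converse, a section $\lambda_2$ of $\pi_2$ immediately produces the section $\beta_2\circ\lambda_2$ of $\pi_3$; the splitting of $\beta_2$ then follows from the (i)$\Rightarrow$(ii) direction of Lemma \ref{casa} once the auxiliary splitting of $\alpha_2$ is secured. The ``in particular'' clause is then formal: the hypothesis $\Psi(K_\infty)(i-1)^\Delta=0$ forces $\Psi(K_\infty)^*(1-i)_\Delta=0$ by the invariant/coinvariant duality for finite $\Delta$-modules, valid because $|\Delta|$ is prime to $p$; so $\pi_2$ is trivially split and the equivalence yields the splitting of $\beta_2$.

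The hard part will be verifying that $\alpha_2$ splits, since the top exact sequence does not split on general grounds. A natural route is to exploit the Galois-theoretic decomposition $J=H\cdot\mathrm{Gal}(M_{K_\infty}/N'_{K_\infty})$ stemming from $N''_{K_\infty}=N'_{K_\infty}\cap K^{BP}_\infty$, which realises $J/H$ as a quotient of $\mathrm{Gal}(M_{K_\infty}/N'_{K_\infty})$; one then lifts this identification back into $J$ after $(-i)_\Delta$ by controlling the residual kernel $\mathrm{Gal}(M_{K_\infty}/K^{BP}_\infty\cdot N'_{K_\infty})$ through Leopoldt's conjecture at every layer $K_n$ and the Gross hypothesis needed to make $\Psi(K_\infty)$ behave as a finite module. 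Once $\alpha_2$ has been shown to split, the remainder of the proof is formal bookkeeping through Lemma \ref{casa}.
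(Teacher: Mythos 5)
Your reduction is structurally identical to the paper's: the $3\times 3$ diagram you describe (with $H=\mathrm{Gal}(M_{K_\infty}/K^{BP}_\infty)$, $J=\mathrm{Gal}(M_{K_\infty}/N''_{K_\infty})$, $G=t_\Lambda(\mathfrak{X}_{K_\infty})$) is exactly the diagram in the paper's proof, the application of Lemma \ref{casa} after twisting and taking $\Delta$-coinvariants is the same, and your treatment of the trivial direction and of the ``in particular'' clause is correct. But the one step that carries actual content --- the splitting of $\alpha_2\colon J(-i)_\Delta\to (J/H)(-i)_\Delta$ --- is precisely the step you defer. Writing that ``a natural route is to \dots control the residual kernel $\mathrm{Gal}(M_{K_\infty}/K^{BP}_\infty\cdot N'_{K_\infty})$ through Leopoldt's conjecture \dots and the Gross hypothesis'' is not an argument: you never show that this kernel vanishes, nor do you indicate how either conjecture would enter such a verification. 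Until that is done, the hypotheses of Lemma \ref{casa} are not met and the equivalence is not established; this is a genuine gap, not bookkeeping.

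For comparison, here is how the paper disposes of it. Since $N'_{K_\infty}/k$ is Galois, $\mathrm{Gal}(M_{K_\infty}/N'_{K_\infty})$ is a $\Lambda[\Delta]$-submodule of $J$, and the restriction maps it \emph{isomorphically} onto $\mathrm{Gal}(K^{BP}_\infty/N''_{K_\infty})=J/H$; this is exactly the statement that $M_{K_\infty}=N'_{K_\infty}\cdot K^{BP}_\infty$ together with $N''_{K_\infty}=N'_{K_\infty}\cap K^{BP}_\infty$, i.e.\ the triviality of your ``residual kernel''. The inverse of that isomorphism is then a section of $J\twoheadrightarrow J/H$ in the category of $\Lambda[\Delta]$-modules, and it survives $\otimes_{\zp}\zp(-i)$ and $\Delta$-coinvariants by functoriality (no exactness of the functor is needed for this part, only for keeping the rows and columns of the diagram exact). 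So you have correctly located the crux and even named the right subobject, but your proposal is incomplete until you actually prove that $\mathrm{Gal}(M_{K_\infty}/N'_{K_\infty})$ maps isomorphically onto $\mathrm{Gal}(K^{BP}_\infty/N''_{K_\infty})$.
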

\begin{proof}
Note that $N'_{K_\infty}/k$ and $K^{BP}_\infty/k$ are Galois extensions. This implies that $\mathrm{Gal}(M_{K_\infty}/N'_{K_\infty})$ and $\mathrm{Gal}(K^{BP}_\infty/N''_{K_\infty})$ are $\Lambda[\Delta]$-submodules of $t_\Lambda(\mathfrak{X}_{K_\infty})$ and $t_\Lambda(BP_{K_\infty})$, respectively. Therefore we have a commutative diagram of $\Lambda[\Delta]$-modules with exact rows and columns
$$
\begin{CD}
@.@.0@.0\\
@.@.@VVV@VVV\\
0@>>>\mathrm{Gal}(M_{K_\infty}/K^{BP}_\infty)@>>>\mathrm{Gal}(M_{K_\infty}/N''_{K_\infty})@>>>\mathrm{Gal}(K^{BP}_\infty/N''_{K_\infty})@>>>0\\
@.@|@VVV@VVV\\
0@>>>\mathrm{Gal}(M_{K_\infty}/K^{BP}_\infty)@>>>t_\Lambda(\mathfrak{X}_{K_\infty})@>>>t_\Lambda(BP_{K_\infty})@>>>0\\
@.@.@VVV@VVV\\
@.@.\Psi(K_\infty)^*(1)@=\Psi(K_\infty)^*(1)@>>>0\\
@.@.@VVV@VVV\\
@.@.0@.0
\end{CD}
$$
Furthermore the surjective homomorphism $\mathrm{Gal}(M_{K_\infty}/N''_{K_\infty})\to\mathrm{Gal}(K^{BP}_\infty/N''_{K_\infty})$ of $\Lambda[\Delta]$-modules splits (since $\mathrm{Gal}(M_{K_\infty}/N'_{K_\infty})$ is an isomorphic preimage of $\mathrm{Gal}(K^{BP}_\infty/N''_{K_\infty})$). Now tensor the above diagram with $\zp(-i)$ and take $\Delta$-coinvariants. The resulting diagram satisfies the hypotheses of Lemma \ref{casa}, which implies the equi\-va\-lence between the two assertions of the proposition.
\end{proof}

Before the description of the relation between $\iota_{k,\infty}$ and the projection $t_\Lambda(\mathfrak{X}_{K_\infty})\to t_\Lambda(BP_{K_\infty})$, we quote the following well-known lemma.

\begin{lemma}\label{ltzpt}
Let $M$ be a $\Lambda[\Delta]$-module which is a finitely generated $\Lambda$-module. Suppose that $(t_\Lambda(M))_{\Gamma_n}$ is finite for all $n\in \mathbb{N}$. Then the natural map $M\to \lim\limits_{\longleftarrow}M_{\Gamma_n}$ induces an isomorphism of $\Lambda[\Delta]$-modules
$$t_\Lambda(M)\cong \lim\limits_{\longleftarrow}t_{\zp}(M_{\Gamma_n}).$$
\end{lemma}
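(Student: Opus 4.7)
The plan is to compare $M$ with the canonical short exact sequence of $\Lambda[\Delta]$-modules
$$0\to t_\Lambda(M)\to M\to M'\to 0,$$
where $M':=M/t_\Lambda(M)$ is $\Lambda$-torsion-free. Since $\omega_n$ is a non-zero-divisor on any torsion-free finitely generated $\Lambda$-module, taking $\Gamma_n$-coinvariants yields
$$0\to t_\Lambda(M)_{\Gamma_n}\to M_{\Gamma_n}\to M'_{\Gamma_n}\to 0.$$
The hypothesis forces $t_\Lambda(M)_{\Gamma_n}$ (and hence its image in $M_{\Gamma_n}$) to be $\zp$-torsion, so a brief diagram chase isolates the $\zp$-torsion submodules:
$$0\to t_\Lambda(M)_{\Gamma_n}\to t_{\zp}(M_{\Gamma_n})\to t_{\zp}(M'_{\Gamma_n})\to 0.$$

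Next I would pass to the inverse limit over $n$. The system $\{t_\Lambda(M)_{\Gamma_n}\}_n$ has finite terms, hence is Mittag--Leffler and its $\lim\limits_{\longleftarrow}^{1}$ vanishes, so the sequence stays exact in the limit. The leftmost inverse limit is identified with $t_\Lambda(M)$ via the standard fact that any finitely generated $\Lambda$-module $N$ is complete and separated in its $\mathfrak{m}$-adic topology, which is finer than the $(\omega_n)$-adic one since $\omega_n\in\mathfrak{m}^{n+1}$; hence $N\cong\lim\limits_{\longleftarrow}N_{\Gamma_n}$.

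The main obstacle is to prove $\lim\limits_{\longleftarrow}t_{\zp}(M'_{\Gamma_n})=0$. For this I would invoke the structure theorem: since $M'$ is torsion-free, there is an injective pseudo-isomorphism $M'\hookrightarrow\Lambda^r$ with finite cokernel $Q$. Applying the snake lemma to multiplication by $\omega_n$ on this short exact sequence yields, as soon as $\omega_n$ annihilates $Q$, a four-term exact sequence
$$0\to Q\to M'_{\Gamma_n}\to(\Lambda/\omega_n)^r\to Q\to 0.$$
Since $(\Lambda/\omega_n)^r\cong\zp[\Gamma/\Gamma_n]^r$ is $\zp$-free, the image of $Q$ inside $M'_{\Gamma_n}$ is exactly $t_{\zp}(M'_{\Gamma_n})$. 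Unwinding the connecting homomorphism shows that, for $n\geq m$ large, the transition $t_{\zp}(M'_{\Gamma_n})\to t_{\zp}(M'_{\Gamma_m})$ identifies with multiplication by $\nu_{n,m}:=\omega_n/\omega_m$ on $Q$. Writing $\nu_{n,m}=\prod_{i=m+1}^{n}\Phi_i$ with each $\Phi_i:=\omega_i/\omega_{i-1}\in\mathfrak{m}$ (since $\Phi_i(0)=p$), one gets $\nu_{n,m}\in\mathfrak{m}^{n-m}$, which annihilates the finite module $Q$ as soon as $n-m$ exceeds its $\mathfrak{m}$-nilpotency index. Hence every compatible family in $\lim\limits_{\longleftarrow}t_{\zp}(M'_{\Gamma_n})$ vanishes, and the functoriality of all the constructions used ensures that the resulting isomorphism $t_\Lambda(M)\cong\lim\limits_{\longleftarrow}t_{\zp}(M_{\Gamma_n})$ is automatically $\Lambda[\Delta]$-equivariant.
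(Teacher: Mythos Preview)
Your argument is correct and self-contained, whereas the paper gives no proof at all here: it simply cites \cite[proof of Proposition 2.1]{N3}. Your route via the structure theorem for torsion-free $\Lambda$-modules, the snake lemma for multiplication by $\omega_n$ on $0\to M'\to\Lambda^r\to Q\to 0$, and the identification of the transition maps on $t_{\zp}(M'_{\Gamma_n})\cong Q$ with multiplication by $\nu_{n,m}\in\mathfrak{m}^{n-m}$ is the standard one and is carried out cleanly.

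One small quibble that does not affect validity: the sentence ``the $\mathfrak{m}$-adic topology\ldots is finer than the $(\omega_n)$-adic one since $\omega_n\in\mathfrak{m}^{n+1}$'' has the comparison reversed. The inclusion $(\omega_n)\subset\mathfrak{m}^{n+1}$ means the $(\omega_n)$-filtration is \emph{contained} in the $\mathfrak{m}$-adic one, so it is the $(\omega_n)$-topology that is finer. What you actually need (and what this inclusion does give) is: (a) $\bigcap_n\omega_n N\subset\bigcap_k\mathfrak{m}^k N=0$, so $N\to\varprojlim N_{\Gamma_n}$ is injective; and (b) a compatible family $(x_n)$ in $\varprojlim N/\omega_n N$ is $\mathfrak{m}$-Cauchy with limit $x\in N$, and $x-x_n\in\omega_n N$ because each $\omega_k N$ (for $k\geq n$) sits inside the closed submodule $\omega_n N$. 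So the conclusion $N\cong\varprojlim N_{\Gamma_n}$ stands; just rephrase the justification.
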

\begin{proof}
See \cite[proof of Proposition 2.1]{N3}.
\end{proof}

The proof of the following proposition is inspired by the papers \cite{N1, N2}.

\begin{prop}\label{nqd}
Suppose that  Leopoldt's conjecture holds for all the layers $K_n$. Then, for every $i\geq 0$ and $i\ne 1$, there are natural isomorphisms of $\Lambda[\Delta]$-modules
$$t_{\Lambda}(\mathfrak{X}_{K_\infty})(-i)\cong \hdin{o'_K}{(i)}^*\quad \textrm{and} \quad t_{\Lambda}(BP_{K_\infty})(-i)\cong \wkin{K}{(i)}^*,$$ 
fitting into a commutative diagram 
$$
\begin{CD}
\hdin{o'_K}{(i)}^*@>\iota_{K,\infty}^*>>\wkin{K}{(i)}^*\\
@V\wr VV@V\wr VV\\
t_{\Lambda}(\mathfrak{X}_{K_\infty})(-i)@>\pi>>t_{\Lambda}(BP_{K_\infty})(-i),
\end{CD}
$$
where $\pi$ is induced by the natural projection $\mathfrak{X}_{K_\infty}\to BP_{K_\infty}$.
\end{prop}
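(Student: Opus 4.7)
The plan is to realize both displayed isomorphisms as inverse limits of finite-level dualities, applying Lemma \ref{ltzpt} to $M=\mathfrak{X}_{K_\infty}(-i)$ and to $M=BP_{K_\infty}(-i)$. The finiteness hypothesis of that lemma is exactly what Leopoldt's conjecture secures: it pins down the $\zp$-rank of $\mathfrak{X}_{K_n}$ to be $r_2(K_n)+1$, matching the free $\Lambda$-rank of $\mathfrak{X}_{K_\infty}$ at each layer and forcing the $\Gamma_n$-coinvariants of the torsion summand to be finite. The analogue for $BP_{K_\infty}$ then follows from the surjection (\ref{slt}) of $t_\Lambda(\mathfrak{X}_{K_\infty})$ onto $t_\Lambda(BP_{K_\infty})$.

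At each finite layer $K_n$, Artin-Verdier / Poitou-Tate duality identifies $\hdet{o'_{K_n}}{(i)}^*$ with a Tate-twisted $H^1_{\acute{e}t}(o'_{K_n},\qp/\zp(1-i))$. Since $K_\infty$ contains $\mu_{p^\infty}$, the Galois group of the maximal extension of $K_\infty$ unramified outside $p$ acts trivially on the coefficients, so this cohomology is computed by a Tate-twist of $\mathfrak{X}_{K_\infty}^*$. Inflation-restriction for the quotient by $\Gamma_n=\mathrm{Gal}(K_\infty/K_n)$, combined with the vanishing consequences of Leopoldt (which kill the free-$\Lambda$ contribution and explain the restriction $i\neq 1$), yields a natural isomorphism
$$t_{\zp}\bigl((\mathfrak{X}_{K_\infty}(-i))_{\Gamma_n}\bigr)\cong\hdet{o'_{K_n}}{(i)}^*.$$
A parallel argument, using the characterization of $BP_{K_n}$ as the maximal locally $\zp$-embeddable quotient of $\mathfrak{X}_{K_n}$ (\cite[Lemme 4.1]{N1}), gives the analogous identification $t_{\zp}((BP_{K_\infty}(-i))_{\Gamma_n})\cong\wket{K_n}{(i)}^*$.

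Passing to the inverse limit over $n$, Lemma \ref{ltzpt} combined with the exchange of direct and inverse limits under Pontryagin duality produces the two natural isomorphisms claimed in the proposition. The commutativity of the displayed square is then automatic from the functoriality of each ingredient (Artin-Verdier, inflation-restriction, and the passage to limits) with respect to the inclusion $\iota_{K_n,i}:\wket{K_n}{(i)}\hookrightarrow\hdet{o'_{K_n}}{(i)}$ on one side and the surjection $\mathfrak{X}_{K_n}\twoheadrightarrow BP_{K_n}$ on the other. The main obstacle is the finite-level identification in step two: one must track the Tate twist carefully through inflation-restriction and verify that, for $i\neq 1$ and under Leopoldt, the free-$\Lambda$ part of $\mathfrak{X}_{K_\infty}$ contributes no $\zp$-torsion to $(\mathfrak{X}_{K_\infty}(-i))_{\Gamma_n}$. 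This is precisely the style of argument pioneered in \cite{N1, N2}, whose techniques the authors emulate.
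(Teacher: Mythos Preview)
Your outline is essentially the paper's own strategy: finite-level duality isomorphisms plus Lemma~\ref{ltzpt}, with Leopoldt securing the finiteness hypothesis. The one place where you are too quick is the ``parallel argument'' for the $BP$/wild-kernel side. There is no direct Artin--Verdier/inflation-restriction computation that hands you $t_{\zp}\bigl((BP_{K_\infty}(-i))_{\Gamma_n}\bigr)\cong\wket{K_n}{(i)}^*$; the wild kernel is defined as the kernel of a localization map, not as an intrinsic cohomology group, and the characterization of $BP_{K_n}$ from \cite[Lemme~4.1]{N1} does not by itself produce this identification. The paper navigates this by first treating the untwisted case $i=0$: there \cite[Th\'eor\`eme~4.2]{N1} furnishes a full commutative square with $t_{\zp}(\mathfrak{X}_{K_n})\to t_{\zp}(BP_{K_n})$ on the bottom, and the isomorphism $\wket{K_n}{}^*\cong t_{\zp}(BP_{K_n})$ is \emph{induced} by the diagram rather than proved independently. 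Only afterwards does one twist by $(-i)$, having separately checked (via the local--global diagram and Lemma~\ref{ltzpt}) the compatibilities $\hdin{o'_K}{(i)}\cong\hdin{o'_K}{}(i)$ and $\wkin{K}{(i)}\cong\wkin{K}{}(i)$. Your route of working directly at twist $i$ throughout is not wrong, but the $BP$ step would require either this reduction to $i=0$ or an explicit local argument you have not supplied.
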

\begin{proof}
We shall prove the statement supposing that $K_{n_0}=K$, \textit{i.e.} all $p$-adic primes are totally ramified in $K_\infty/K$. The general case easily follows since no object appearing in the statement changes if $K$ is replaced by a higher layer of the cyclotomic tower. Furthermore every morphism we shall consider in the proof can be easily checked to be invariant with respect to the action of the Galois group of any Galois extension $K/E$. 

Recall that, for every $i\in\mathbb{Z}$, there is an isomorphism of $\zp[\Delta]$-modules
\begin{equation}\label{h2t}
\hdet{o'_K}{(i)}^*\cong t_{\zp}(\mathfrak{X}_{K_\infty}(-i)_{\Gamma})
\end{equation}
(see \cite[Lemme 4.1]{N3}). Similarly one shows that for every prime $v$ above $p$ in $K$ there is an isomorphism of $\zp[\Delta_v]$-modules
$$\hdet{K_v}{(i)}^*\cong t_{\zp}(\mathfrak{X}_{K_{v,_\infty}}(-i)_{\Gamma_v}),$$
where $\mathfrak{X}_{K_{v,\infty}}$ is the Galois group of the maximal abelian pro-$p$-extension of the cyclotomic $\zp$-extension $K_{v,\infty}/K_v$, $\Gamma_v=\mathrm{Gal}(K_{v,\infty}/K_v)$ and $\Delta_v=\mathrm{Gal}(K_v/k_{v_0})$ where $v_0$ is the prime of $k$ below $v$.
The above isomorphisms yield a commutative diagram of $\zp[\Delta]$-modules
\begin{equation}\label{diag}
\begin{CD}
\underset{v\mid p}{\oplus}\hdet{K_v}{(i)}^*@>>>\hdet{o'_K}{(i)}^*\\
@V\wr VV@V\wr VV\\
\underset{v\mid p}{\oplus}t_{\zp}(\mathfrak{X}_{K_{v,\infty}}(-i)_{\Gamma_v})@>>>t_{\zp}(\mathfrak{X}_{K_\infty}(-i)_{\Gamma}).
\end{CD}
\end{equation}
If $i\ne 1$, then the group $\left(t_{\Lambda_v}(\mathfrak{X}_{K_{v,\infty}}(-i)\right)_{\Gamma_v}$ is finite for every $v\mid p$ in $K$, where $\Lambda_v=\zp[\![\Gamma_v]\!]$ (this follows from \cite[Theorem 25]{Iw}). The same holds for $\left(t_{\Lambda}(\mathfrak{X}_{K_\infty})(-i)\right)_{\Gamma}$ provided $i\geq 2$ (this follows easily from \cite[Lemme 4.3]{N3}) or $i=0$ and Leopoldt's conjecture holds for $K$ (see \cite[Lemma 21]{Iw}). Moreover, an easy check shows the diagram in (\ref{diag}) to be compatible with the restriction maps on both sides. Therefore, for every $i\geq 0$ and $i\ne 1$, taking direct limits over $K_\infty/K$ with respect to those maps and using Lemma \ref{ltzpt}, we get a commutative diagram (under Leopoldt's conjecture for all the layers $K_n$ when $i=0$)
$$
\begin{CD}
\underset{v\mid p}{\oplus}\hdin{K_v}{(i)}^*@>>>\hdin{o'_{K}}{(i)}^*\\
@V\wr VV@V\wr VV\\
\underset{v\mid p}{\oplus}t_{\Lambda_v}(\mathfrak{X}_{K_{v,\infty}}(-i))@>>>t_{\Lambda}(\mathfrak{X}_{K_\infty}(-i)),
\end{CD}
$$
where $\hdin{K_v}{(i)}=\lim\limits_{\longrightarrow}\hdet{(K_v)_n}{(i)}$. Since clearly
\begin{equation}\label{tinv}
t_{\Lambda}(\mathfrak{X}_{K_\infty}(-i))=t_{\Lambda}(\mathfrak{X}_{K_\infty})(-i)\quad\textrm{and} \quad t_{\Lambda_v}(\mathfrak{X}_{K_{v,\infty}}(-i))=t_{\Lambda_v}(\mathfrak{X}_{K_{v,\infty}})(-i),
\end{equation}
we deduce that, for every $i\geq 2$, there is a commutative diagram
\begin{equation}\label{h2inv}\begin{CD}
\hdin{o'_{K}}{(i)}@>>>\underset{v\mid p}{\oplus}\hdin{K_v}{(i)}\\
@V\wr VV@V\wr VV\\
\hdin{o'_{K}}{}(i)@>>>\underset{v\mid p}{\oplus}\hdin{K_v}{}(i)\\
\end{CD}
\end{equation}
which implies in particular that 
\begin{equation}\label{wkinv}
\wkin{o'_K}{(i)}\cong \wkin{o'_K}{}(i).
\end{equation}
Now, the field of Bertrandias-Payan over $K$ certainly contains the compositum of all the $\zp$-extensions of $K$. Therefore the map $t_{\zp}(\mathfrak{X}_{K})\to t_{\zp}(BP_{K})$, induced by (\ref{XBP}), is surjective. In fact, it follows easily from the arguments of \cite[Th\'eor\`eme 4.2]{N1} that there is a commutative diagram of $\zp[\mathrm{Gal}(K/k)]$-modules with surjective rows
$$
\begin{CD}
\hdet{o'_{K}}{}^*@>>>\wket{K}{}^*\\
@V\wr VV@V\wr VV\\
 t_{\zp}(\mathfrak{X}_{K}) @>>> t_{\zp}(BP_{K})\\
\end{CD}
$$
where the left-hand isomorphism is (\ref{h2t}), while the right-hand one is induced by the diagram. As before, the above diagram is compatible with restriction maps and taking inverse limits with respect to those maps over $K_\infty/K$ yields a commutative diagram 
\begin{equation}\label{coomgalois}
\begin{CD}
\hdin{o'_K}{}^*@>>> \wkin{o'_K}{}^*\\
@V\wr VV@V\wr VV\\
 \lim\limits_{\longleftarrow} t_{\zp}(\mathfrak{X}_{K_n})@>>>\lim\limits_{\longleftarrow} t_{\zp}(BP_{K_n}).\\
\end{CD}
\end{equation}
If Leopolt's conjecture holds for $K_n$ for every $n\in\mathbb{N}$, using Lemma \ref{ltzpt} we can replace the bottom line of diagram (\ref{coomgalois}) by the surjection $t_{\Lambda}(\mathfrak{X}_{K_\infty})\to t_{\Lambda}(BP_{K_\infty})$. Then the result follows by applying the functor $\otimes_{\zp}\zp(-i)$ to diagram (\ref{coomgalois}) and using (\ref{tinv}), (\ref{h2inv}) and (\ref{wkinv}).
\end{proof}

\begin{remark}\label{chazadinfty}
From the proof of the above proposition, we deduce that, if $i\geq 2$ and Leopoldt's conjecture holds for $K_n$ for all $n\in\mathbb{N}$, then there is a commutative diagram of $\Lambda[\Delta]$-modules
$$
\begin{CD}
\wkin{K}{(i)} @>\iota_{K,\infty,i}>> \hdin{o'_K}{(i)}\\
@V\wr VV@V\wr VV\\
\wkin{K}{}(i) @>\iota_{K,\infty,0}(i)>> \hdin{o'_K}{}(i)\\
\end{CD}
$$
whose vertical arrows are natural isomorphisms. 
\end{remark}

\begin{cor}\label{ideaNQD}
Suppose that Leopoldt's conjecture holds for all the layers $K_n$. Then for every $i\geq 2$ the following conditions are equivalent:
\begin{enumerate}
\item the injective $\Lambda$-homomorphism $\wkin{k}{(i)}\to \hdin{o'_k}{(i)}$ splits;
\item the surjective $\Lambda$-homomorphism $t_\Lambda(\mathfrak{X}_{K_\infty})(-i)_\Delta\to t_{\Lambda}(BP_{K_\infty})(-i)_\Delta$ splits.
\end{enumerate}
In particular, if $\Psi(K_\infty)(i-1)^\Delta=0$, the inclusion $\wkin{k}{(i)}\to \hdin{o'_k}{(i)}$ splits as soon as Gross's conjecture holds for all the layers $K_n$.
\end{cor}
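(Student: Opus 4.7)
\noindent
The plan is to transport the statement of Proposition \ref{nqd}, which lives at the level of $K_\infty$, down to $k_\infty$ by taking $\Delta$-coinvariants, and then to invoke Pontryagin duality in order to exchange the splitting of the injection of discrete modules $\iota_{k,\infty}$ with the splitting of a surjection of compact $\Lambda$-modules. The final assertion will then be extracted from Proposition \ref{lmn2}.

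Concretely, I would start from the $\Lambda[\Delta]$-equivariant square supplied by Proposition \ref{nqd},
$$
\begin{CD}
\hdin{o'_K}{(i)}^*@>\iota_{K,\infty}^*>>\wkin{K}{(i)}^*\\
@V\wr VV@V\wr VV\\
t_{\Lambda}(\mathfrak{X}_{K_\infty})(-i)@>\pi>>t_{\Lambda}(BP_{K_\infty})(-i),
\end{CD}
$$
and apply the functor $(-)_\Delta$. Since $|\Delta|$ divides $p-1$ and is thus a unit in $\zp$, $(-)_\Delta$ is exact on $\zp[\Delta]$-modules and naturally isomorphic to $(-)^\Delta$; combined with the inflation-restriction identifications $\hdin{o'_k}{(i)}\cong \hdin{o'_K}{(i)}^\Delta$ and $\wkin{k}{(i)}\cong \wkin{K}{(i)}^\Delta$, and with the standard compatibility of Pontryagin duality with $\Delta$-(co)invariants, this produces a commutative square of $\Lambda$-modules
$$
\begin{CD}
\hdin{o'_k}{(i)}^*@>\iota_{k,\infty}^*>>\wkin{k}{(i)}^*\\
@V\wr VV@V\wr VV\\
t_{\Lambda}(\mathfrak{X}_{K_\infty})(-i)_\Delta@>\pi_\Delta>>t_{\Lambda}(BP_{K_\infty})(-i)_\Delta
\end{CD}
$$
whose vertical arrows are isomorphisms.

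Because Pontryagin duality is an anti-equivalence between compact and discrete $\Lambda$-modules that preserves and reflects $\Lambda$-linear splittings, the injection $\iota_{k,\infty}$ splits if and only if $\iota_{k,\infty}^*$ does, if and only if $\pi_\Delta$ does; this yields the equivalence (1)$\Leftrightarrow$(2). For the last sentence, Gross's conjecture together with Lemma \ref{CPsi} ensures that $\Psi(K_\infty)$ is finite, and Pontryagin duality for finite modules then provides the isomorphism $\Psi(K_\infty)^*(1-i)_\Delta\cong(\Psi(K_\infty)(i-1)^\Delta)^*$. If this latter group vanishes, the surjection $t_\Lambda(\mathfrak{X}_{K_\infty})(-i)_\Delta\to \Psi(K_\infty)^*(1-i)_\Delta$ appearing in Proposition \ref{lmn2} is trivially split, which by that proposition forces $\pi_\Delta$ to split; the equivalence just established then delivers the splitting of $\iota_{k,\infty}$. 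The only real difficulty is the careful bookkeeping between Pontryagin duality, Tate twists and $\Delta$-(co)invariants; once this is in place, the corollary is a formal consequence of Propositions \ref{nqd} and \ref{lmn2}.
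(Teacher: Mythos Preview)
Your argument is correct and follows essentially the same route as the paper's own proof: reduce to $\Delta$-coinvariants via Pontryagin duality and the identification $\hdin{o'_K}{(i)}^\Delta=\hdin{o'_k}{(i)}$, then invoke Proposition~\ref{nqd} for the equivalence and Proposition~\ref{lmn2} for the final assertion. The paper compresses all of your duality bookkeeping into the phrase ``standard properties of Pontryagin duality'', but the underlying steps are the same.
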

\begin{proof}
Using standard properties of Pontryagin duality and the equality $\hdin{o'_K}{(i)}^\Delta=\hdin{o'_k}{(i)}$, it is easy to show that the first condition is equivalent to the splitting of the surjective map
$$\left(\hdin{o'_K}{(i)}^*\right)_\Delta\to \left(\wkin{K}{(i)})^*\right)_\Delta.$$
Therefore the equivalence between the two statements of the corollary follows by Proposition \ref{nqd}. The last assertion follows by Proposition \ref{lmn2}.
\end{proof} 

Once we have given conditions for the inclusion $\wkin{k}{(i)}\subseteq \hdin{o'_k}{(i)}$ to split, we aim to see when the splitting at infinite level implies the splitting at finite levels. It is natural to look at this problem using Galois descent.

\begin{prop}\label{main}
Let $k$ be a number field and $i\geq 2$ be an integer. Suppose that
\begin{enumerate}  
\item[(i)] the surjective homomorphism $\hdet{o'_k}{(i)}\to \hdin{o'_k}{(i)}^{\Gamma}$ splits;
\item[(ii)] the injective $\Lambda$-homomorphism $\wkin{k}{(i)}\to \hdin{o'_k}{(i)}$ splits.
\end{enumerate}
Then the inclusion $\wket{k}{(i)}\to \hdet{o'_k}{(i)}$ splits.
\end{prop}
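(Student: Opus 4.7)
The approach is to combine the Galois descent sequences of Theorem \ref{descent} with the formal Lemma \ref{casa}. Specializing Theorem \ref{descent} to $n=0$ produces the commutative diagram with exact rows
$$
\begin{CD}
0 @>>> \xKo(i-1)_{\mathcal{G}} @>>> \wket{k}{(i)} @>>> \wkin{k}{(i)}^{\Gamma} @>>> 0 \\
@. @| @VV\iota_k V @VV\iota_{k,\infty}^{\Gamma} V \\
0 @>>> \xKo(i-1)_{\mathcal{G}} @>>> \hdet{o'_k}{(i)} @>>> \hdin{o'_k}{(i)}^{\Gamma} @>>> 0
\end{CD}
$$
in which the leftmost vertical arrow is the identity. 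I would then extend this to the $3\times 3$ configuration of Lemma \ref{casa} by setting $A_1=B_1=\xKo(i-1)_{\mathcal{G}}$ with $\iota_1=\mathrm{id}$, $(A_2,B_2,\iota_2)=(\wket{k}{(i)},\hdet{o'_k}{(i)},\iota_k)$, $(A_3,B_3,\iota_3)=(\wkin{k}{(i)}^{\Gamma},\hdin{o'_k}{(i)}^{\Gamma},\iota_{k,\infty}^{\Gamma})$, and $T_j=\mathrm{coker}(\iota_j)$ for $j=2,3$. Since $\iota_1$ is an isomorphism, so is $\tau_2$, and the hypotheses of Lemma \ref{casa} are satisfied in the category of abelian groups.

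The splitting of $\iota_k$ is tautologically equivalent to the splitting of the quotient map $\pi_2:\hdet{o'_k}{(i)}\to T_2$. By the implication (ii)$\Rightarrow$(i) of Lemma \ref{casa}, it therefore suffices to check that $\beta_2$ and $\pi_3$ both split as abelian groups. Hypothesis (i) of the proposition is exactly the splitting of $\beta_2:\hdet{o'_k}{(i)}\to\hdin{o'_k}{(i)}^{\Gamma}$. For $\pi_3$, I would use hypothesis (ii): a $\Lambda$-linear section of $\iota_{k,\infty}$ yields a decomposition $\hdin{o'_k}{(i)}\cong \wkin{k}{(i)}\oplus X$ of $\Lambda$-modules. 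Applying the functor $(-)^{\Gamma}$, which commutes with finite direct sums, produces $\hdin{o'_k}{(i)}^{\Gamma}\cong \wkin{k}{(i)}^{\Gamma}\oplus X^{\Gamma}$, so that $\iota_3=\iota_{k,\infty}^{\Gamma}$ splits as abelian groups, and hence so does the projection $\pi_3$ onto its cokernel $T_3$.

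Feeding these two splittings into Lemma \ref{casa} produces a section of $\pi_2$, which is the desired splitting of $\iota_k:\wket{k}{(i)}\hookrightarrow\hdet{o'_k}{(i)}$. The proof is essentially a diagram chase; the only piece of bookkeeping that needs care is the passage between the categories of $\Lambda$-modules and abelian groups, namely using that the $\Lambda$-splitting of (ii) descends to an abelian-group splitting after taking $\Gamma$-invariants, so that Lemma \ref{casa} can legitimately be applied in the category of abelian groups.
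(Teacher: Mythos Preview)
Your proof is correct and follows essentially the same approach as the paper: both combine the descent diagram of Theorem \ref{descent} (at $n=0$) with Lemma \ref{casa}, using hypothesis (i) for the splitting of $\beta_2$ and hypothesis (ii), via $\Gamma$-invariants, for the splitting of $\pi_3$. Your write-up is simply a more explicit unpacking of the paper's two-line argument.
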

\begin{proof} 
If $\wkin{k}{(i)} \to \hdin{o'_k}{(i)}$ splits in the category of $\Lambda$-modules, then
$\wkin{k}{(i)}^{\Gamma}\to \hdin{o'_k}{(i)}^{\Gamma}$ splits. The latter condition implies the splitting of $\wket{k}{(i)}\to \hdet{o'_k}{(i)}$, thanks to (i), Theorem \ref{descent} and Lemma \ref{casa}. 
\end{proof}

Combining Proposition \ref{main} and Corollary \ref{ideaNQD} gives the main result of this section.

\begin{teo}\label{randonnee}
Let $k$ be a number field and $i\geq 2$ be an integer. Suppose that
\begin{enumerate}
\item[(a)] the surjective homomorphism $\hdet{o'_k}{(i)}\to \hdin{o'_k}{(i)}^{\Gamma}$ splits; 
\item[(b)] $\Psi(K_\infty)(i-1)^{\Delta}=0$;
\item[(c)] Gross's and Leopoldt's conjectures hold for all the layers $K_n$. 
\end{enumerate}
Then the injective homomorphism $\wket{k}{(i)}\to \hdet{o'_k}{(i)}$ splits. 
\end{teo}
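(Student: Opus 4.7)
The plan is to assemble the theorem by stacking the two preceding results in a two-step manner, since by design it is a clean packaging of Corollary \ref{ideaNQD} on top of Proposition \ref{main}.

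First I would invoke Corollary \ref{ideaNQD}. Its hypotheses are: Leopoldt's conjecture for every layer $K_n$ (to have Proposition \ref{nqd} available), Gross's conjecture for every layer $K_n$ (so that Proposition \ref{lmn2} applies), and the vanishing $\Psi(K_\infty)(i-1)^{\Delta}=0$. These are precisely assumptions (b) and (c) of the theorem. The corollary then tells us that the injective $\Lambda$-homomorphism
\[
\wkin{k}{(i)}\longrightarrow \hdin{o'_k}{(i)}
\]
splits, and crucially, splits as a morphism of $\Lambda$-modules (and not merely of $\zp$-modules).

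Second, I would feed this into Proposition \ref{main}, applied with the same $k$ and $i$. That proposition has two hypotheses: (i) the splitting of $\hdet{o'_k}{(i)}\to \hdin{o'_k}{(i)}^{\Gamma}$, which is exactly assumption (a) of the theorem; and (ii) the $\Lambda$-splitting of $\wkin{k}{(i)}\to \hdin{o'_k}{(i)}$, which has just been obtained. Passing to $\Gamma$-invariants from (ii) and combining with (i) and the Galois descent diagram of Theorem \ref{descent} via the formal Lemma \ref{casa} yields the splitting of $\wket{k}{(i)}\to\hdet{o'_k}{(i)}$, which is the conclusion sought.

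There is no real obstacle, since all the substantive work is packed into the two results being invoked: the heart of the matter lies in Corollary \ref{ideaNQD} (which relies on the identification via Bertrandias--Payan fields, Proposition \ref{nqd}, and the reduction of the infinite-level splitting to $\Psi(K_\infty)$ in Proposition \ref{lmn2}) and in Proposition \ref{main} (a diagram chase through Theorem \ref{descent}). The only point that needs a moment of care when writing it out is to confirm that the output of Corollary \ref{ideaNQD} meets hypothesis (ii) of Proposition \ref{main} in the correct category, namely that we indeed have a $\Lambda$-linear section so that taking $\Gamma$-invariants preserves the splitting; this is immediate from the statement of the corollary.
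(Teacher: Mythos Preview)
Your proposal is correct and is exactly the paper's own argument: the theorem is obtained by combining Corollary \ref{ideaNQD} (fed by hypotheses (b) and (c)) with Proposition \ref{main} (fed by hypothesis (a) and the output of the corollary). Your explicit check that Corollary \ref{ideaNQD} yields a $\Lambda$-linear splitting, so that hypothesis (ii) of Proposition \ref{main} is met, is the only point worth spelling out, and you have done so.
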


\begin{remarks}
\begin{enumerate}
\item[1)] Suppose that the Iwasawa $\mu$ invariant of $X'_{K_\infty}$ is trivial (this holds for example if $k$ is abelian by a classical result of Ferrero and Washington). Then, by Theorem \ref{descent}, Condition (a) of Theorem \ref{randonnee} holds asymptotically, \textit{i.e.} the surjection $\hdet{o'_{k_n}}{(i)}\to \hdin{o'_k}{(i)}^{\Gamma_n}$ splits for $n$ large enough. 
\item[2)] Suppose that $k$ is totally real (hence $K$ is CM).
\begin{itemize}
\item If $i$ is even, then $\xKo(i-1)_\Delta=0$ by Proposition \ref{coates}. In particular  Condition (a) of Theorem \ref{randonnee} holds, thanks to Theorem \ref{descent}.
\item If $i$ is odd, then we have
$$\Psi(K_{\infty})(i-1)^{\Delta}\subseteq \Psi(K_{\infty})(i-1)^{\Delta_0}\cong \Psi(K_{\infty})^{\Delta_0}(i-1),$$
where $K^+$ denotes the maximal totally real subfield of $K$ and $\Delta_0=\mathrm{Gal}(K/K^+)\subseteq \Delta$. Now, under Greenberg's conjecture $\Psi(K_\infty)^{\Delta_0}=\Psi(K^+_\infty)=0$ by Remark \ref{oujda}. In particular, under Greenberg's conjecture, Condition (b) of Theorem \ref{randonnee} holds. 
\end{itemize}
\item[3)] Of course, if $\xKo(i-1)_{\Delta}=0$, then, by Theorem \ref{descent}, Condition (a) of Theorem \ref{randonnee} is satisfied. However the condition $\xKo(i-1)_{\Delta}=0$ could in general be too strong for our purposes.  In fact, by Schneider's isomorphism
$$\wket{k}{(i)} \cong X'_{K_\infty}(i-1)_{\mathcal{G}}=(X'_{K_\infty}(i-1)_{\Delta})_\Gamma.$$
In particular, by Nakayama's lemma, $\wket{k}{(i)}$ is trivial precisely when $X'_{K_\infty}(i-1)_{\Delta}$ is. Now, if $k$ is a totally real field satisfying Greenberg's conjecture and $i\equiv 1 \pmod{\#\Delta}$, then 
$$X'_{K_\infty}(i-1)_{\Delta}\cong X'_{k_\infty}=\xko\cong\xKo(i-1)_{\Delta}.$$
Therefore, in this situation, the triviality of $\xKo(i-1)_{\Delta}$ is equivalent to that of $\wket{k}{(i)}$.    
\end{enumerate}
\end{remarks}


\section{Conditions for the splitting via Galois codescent}\label{code}
In this section we shall follow a different approach to find a condition for the inclusion $\wket{k}{(i)}\to\hdet{k}{(i)}$ to split. We study the splitting of the inclusion of $\Lambda$-modules $\wkiw{k}{(i)}\to\hdiw{k}{(i)}$ and then use the good codescent properties of higher wild and tame kernels.
   
Taking inverse limits of the exact sequence (\ref{sequence}) for $i\geq 1$ along the cyclotomic tower, we obtain the following exact sequence of $\Lambda$-modules
\begin{equation}\label{seiw}
0\to\wkiw{k}{(i)}\stackrel{\iota_{k,Iw}}{\longrightarrow} \hdiw{o'_k}{(i)} \stackrel{\pi_{k,Iw}}{\longrightarrow} \underset{v\mid p}{\oplus}\underset{w\mid v}{\oplus}\hdiw{k_v}{(i)}\to \hzin{k}{(1-i)}^*\to 0
\end{equation}
where, in the second sum, $w$ runs over the primes of $k_\infty$ dividing a $p$-adic prime $v$ of $k$. Set  
$$\oplus_{k,Iw}=\underset{v\mid p}{\oplus}\underset{w\mid v}{\oplus}\hdiw{k_v}{(i)},\qquad\tilde\oplus_{k,Iw}=\mathrm{ker}\left(\oplus_{k,Iw}\to \hzin{k}{(1-i)}^*\right),$$
%
%
%
%
so that we have an exact sequence 
\begin{equation}\label{seiwsimp}
0\to\wkiw{k}{(i)}\stackrel{\iota_{k,Iw}}{\longrightarrow}\hdiw{o'_k}{(i)} \stackrel{\pi_{k,Iw}}{\longrightarrow} \tilde\oplus_{k,Iw}\to 0.
\end{equation}
Note that the above exact sequence splits in the category of $\zp$-modules since $\tilde\oplus_{k,Iw}$ is either trivial or a free $\zp$-module ($\hdiw{k_v}{(i)}\cong\hzin{k_v}{(1-i)}^*$ is either trivial or $\zp$-free of rank $1$).

Let $M,N$ be two finitely generated compact $\Lambda$-modules, then we let $\mathrm{Hom}(M, N)$ (resp. $\mathrm{Hom}_\Lambda(M, N)$) denote the group of continuous $\zp$-homomorphisms (resp. $\Lambda$-homomorphisms). Then $\mathrm{Hom}(M, N)$ has a structure of $\Gamma$-module such that $(\gamma f)(m)=\gamma f(\gamma^{-1}m)$, for every $f\in \mathrm{Hom}(M, N)$, $\gamma\in\Gamma$ and $m\in M$. In particular $\mathrm{Hom}(M, N)^{\Gamma}=\mathrm{Hom}_\Gamma(M, N)=\mathrm{Hom}_\Lambda(M, N)$.

\begin{lemma}\label{pavia}
Let 
\begin{equation}\label{m123}
0\to M_1 \stackrel{\iota}{\longrightarrow} M_2 \stackrel{\pi}{\longrightarrow} M_3\to 0
\end{equation} 
be an exact sequence of finitely generated compact $\Lambda$-modules which splits in the category of $\zp$-modules. Then (\ref{m123}) splits in the category of $\Lambda$-modules if and only if 
\begin{equation}\label{condition123}
\mathrm{ker}\Big(\mathrm{Hom}(M_3, M_1)_\Gamma\stackrel{\iota\scriptscriptstyle{\circ}}{\longrightarrow}\mathrm{Hom}(M_3, M_2)_\Gamma\Big)=0.
\end{equation}
\end{lemma}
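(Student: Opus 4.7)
The plan is to apply the functor $\mathrm{Hom}(M_3,-)$ to the sequence (\ref{m123}). Since (\ref{m123}) splits in the category of $\zp$-modules, any $\zp$-section $s$ of $\pi$ produces lifts $g\mapsto s\circ g$, so $\pi\circ -:\mathrm{Hom}(M_3,M_2)\to\mathrm{Hom}(M_3,M_3)$ is surjective and one obtains a short exact sequence of compact $\Gamma$-modules
$$0 \to \mathrm{Hom}(M_3, M_1) \to \mathrm{Hom}(M_3, M_2) \to \mathrm{Hom}(M_3, M_3) \to 0.$$
Since $\Gamma\cong\zp$ has $p$-cohomological dimension $1$, continuous $\Gamma$-cohomology is computed by $H^0(\Gamma,\cdot)=(\cdot)^\Gamma$ and $H^1(\Gamma,\cdot)=(\cdot)_\Gamma$, and the resulting long exact sequence terminates at
$$\mathrm{Hom}_\Lambda(M_3, M_2) \to \mathrm{Hom}_\Lambda(M_3, M_3) \stackrel{\delta}{\longrightarrow} \mathrm{Hom}(M_3, M_1)_\Gamma \stackrel{\iota\circ}{\longrightarrow} \mathrm{Hom}(M_3, M_2)_\Gamma.$$

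At this point the picture is entirely formal. The sequence (\ref{m123}) splits as $\Lambda$-modules precisely when $\mathrm{id}_{M_3}\in\mathrm{Hom}_\Lambda(M_3,M_3)$ lifts to $\mathrm{Hom}_\Lambda(M_3,M_2)$, i.e.\ when $\delta(\mathrm{id}_{M_3})=0$; and by exactness, condition (\ref{condition123}) is equivalent to $\delta=0$. Thus the lemma reduces to the implication $\delta(\mathrm{id}_{M_3})=0\Rightarrow\delta=0$ (the converse being tautological).

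This last implication is the only piece of content in the argument, and it amounts to the fact that $\delta$ is a right $\mathrm{Hom}_\Lambda(M_3,M_3)$-module homomorphism. Concretely, I fix a topological generator $\gamma$ of $\Gamma$ and a $\zp$-section $s$ of $\pi$. For every $f\in\mathrm{Hom}_\Lambda(M_3,M_3)$, the composite $s\circ f$ is a $\zp$-lift of $f$, so the standard description of the connecting homomorphism yields
$$\delta(f) = \bigl[(\gamma - 1)(s\circ f)\bigr] \in \mathrm{Hom}(M_3, M_1)_\Gamma.$$
Because $f$ is $\Lambda$-linear, $\gamma\circ f\circ\gamma^{-1}=f$, and therefore $(\gamma-1)(s\circ f)=\bigl((\gamma-1)s\bigr)\circ f$; moreover right-composition by $f$ descends to the coinvariants for the very same reason. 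Hence $\delta(f)=\delta(\mathrm{id}_{M_3})\circ f$, and the vanishing of $\delta(\mathrm{id}_{M_3})$ forces $\delta\equiv 0$. The main subtlety, and the only technical point one must verify, is that the compactness hypotheses guarantee that all the relevant $\mathrm{Hom}$'s are compact $\Gamma$-modules and that $(\gamma-1)\mathrm{Hom}(M_3,M_1)$ is closed, so that the coinvariants are Hausdorff and the long exact sequence above genuinely makes sense in the continuous setting.
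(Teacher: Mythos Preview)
Your proof is correct and follows essentially the same route as the paper: apply $\mathrm{Hom}(M_3,-)$ to obtain a short exact sequence of $\Gamma$-modules, then use the resulting four-term sequence linking $\Gamma$-invariants and $\Gamma$-coinvariants (the paper phrases this via the snake lemma rather than cohomological dimension, but it is the same sequence). The only notable difference is that for the forward implication the paper argues more directly---if (\ref{m123}) splits over $\Lambda$ then so does the Hom-sequence, hence the map on coinvariants is injective---whereas you establish the identity $\delta(f)=\delta(\mathrm{id}_{M_3})\circ f$; your argument is correct but more elaborate than needed here.
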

\begin{proof}
Since (\ref{m123}) splits as $\zp$-modules, we have an exact sequence of $\Gamma$-modules
$$0\to \mathrm{Hom}(M_3, M_1)\stackrel{\iota\scriptscriptstyle{\circ}}{\longrightarrow}\mathrm{Hom}(M_3, M_2)\stackrel{\pi\scriptscriptstyle{\circ}}{\longrightarrow}\mathrm{Hom}(M_3, M_3)\to 0.$$
Note that if (\ref{m123}) splits as $\Gamma$-modules, then clearly the same holds for the above sequence and in particular (\ref{condition123}) holds. 
Suppose conversely that (\ref{condition123}) holds. A standard application of the snake lemma to the above exact sequence gives an exact sequence
$$\mathrm{Hom}_\Gamma(M_3, M_2)\stackrel{\pi\scriptscriptstyle{\circ}}{\longrightarrow}\mathrm{Hom}_\Gamma(M_3, M_3) \to \mathrm{Hom}(M_3, M_1)_\Gamma\stackrel{\iota\scriptscriptstyle{\circ}}{\longrightarrow}\mathrm{Hom}(M_3, M_2)_\Gamma.$$
We deduce that $\pi{\scriptscriptstyle\circ}:\mathrm{Hom}_\Gamma(M_3, M_2)\to \mathrm{Hom}_\Gamma(M_3, M_3)$ is surjective. In particular, there exists $\sigma\in \mathrm{Hom}_\Gamma(M_3, M_2)=\mathrm{Hom}_\Lambda(M_3, M_2)$ such that $\pi{\scriptscriptstyle\circ\,}\sigma=\mathrm{id}_{M_3}$. 
\end{proof}

When $i=1$ we have $\wket{K}{(1)}\cong A'_K$ and therefore (\ref{sequence}) reads 
$$0\to A'_K\to \hdet{o'_K}{(1)}\to \underset{v\mid p}{\oplus} \hdet{K_v}{(1)}\to \hzet{K}{}^*\to 0.$$
Note that 
$$\hdet{K_v}{(1)}\cong  \hzet{K_v}{}^*\cong \zp$$
by local duality. Hence the inclusion $A'_K\to \hdet{o'_K}{(1)}$ splits as $\zp[\Delta]$-modules and therefore also as $\Lambda[\Delta]$-modules (with trivial $\Gamma$-action), since it maps $A'_K$ isomorphically onto $t_{\zp}(\hdet{o'_K}{(1)})$. In particular the inclusion $A'_K(i-1)_\Delta\to \hdet{o'_K}{(1)}(i-1)_\Delta$ splits as $\Lambda$-modules.


\begin{lemma}\label{chazad}
For every $i\in\mathbb{Z}$, there is a commutative diagram of $\Lambda[\Delta]$-modules
$$
\begin{CD}
\wkiw{K}{(i)} @>\iota_{K,Iw,i}>> \hdiw{o'_K}{(i)}\\ 
@V\wr VV@V\wr VV\\ 
\wkiw{K}{}(i) @>\iota_{K,Iw,0}(i)>> \hdiw{o'_K}{}(i)\\ 
\end{CD}
$$
whose vertical arrows are natural isomorphisms. 
\end{lemma}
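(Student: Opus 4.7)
The plan is to produce natural $\Lambda[\Delta]$-module isomorphisms
$$\hdiw{o'_K}{(i)}\cong \hdiw{o'_K}{}(i) \quad\textrm{and}\quad \wkiw{K}{(i)}\cong \wkiw{K}{}(i)$$
that are compatible with the inclusions $\iota_{K,Iw,i}$ and $\iota_{K,Iw,0}(i)$; the statement will then follow at once. The basic mechanism is the interplay, peculiar to Iwasawa-style inverse limits, between Tate-twisting the coefficients and Tate-twisting the $\Lambda$-module structure.

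Concretely, I would first fix a compatible system $(\zeta_{p^m}^{\otimes i})_{m\geq 0}$ of generators of $\zp(i)=\varprojlim_m \mu_{p^m}^{\otimes i}$. For each $n\geq m$ one has $\mu_{p^m}\subset K_n$, so $\mathrm{Gal}(\overline{K}/K_n)$ acts trivially on $\mu_{p^m}^{\otimes i}$, and the chosen generator yields an isomorphism of $\mathbb{Z}/p^m\mathbb{Z}$-modules
$$\alpha_{n,m}\colon H^2_{\acute{e}t}(o'_{K_n},\mu_{p^m}^{\otimes i}) \xrightarrow{\,\sim\,} H^2_{\acute{e}t}(o'_{K_n},\mathbb{Z}/p^m\mathbb{Z}).$$
These $\alpha_{n,m}$'s are functorial with respect to corestriction in $n$ and reduction in $m$, so they pass to the double inverse limit to give an isomorphism of abelian groups $\hdiw{o'_K}{(i)}\cong \hdiw{o'_K}{}$. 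However, $\mathrm{Gal}(K_n/K)$ acts on $\mu_{p^m}^{\otimes i}$ through the cyclotomic character $\chi^i\pmod{p^m}$ while it acts trivially on $\mathbb{Z}/p^m\mathbb{Z}$, so each $\alpha_{n,m}$ fails to be Galois-equivariant precisely by this twist. The key point to check will be that, upon passing to the inverse limit, this pointwise discrepancy is exactly absorbed by the Tate twist on the target $\Lambda$-module, promoting the above map to a $\Lambda[\Delta]$-module isomorphism $\hdiw{o'_K}{(i)}\cong \hdiw{o'_K}{}(i)$.

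Next, I would apply the same construction to each local term $\hdiw{K_v}{(i)}$ (for $v\mid p$) and to $\hzin{K}{(1-i)}^*$, and observe that localization commutes with the trivializations $\alpha_{n,m}$ by functoriality of \'etale cohomology. This identifies the Poitou-Tate-type sequence (\ref{seiwsimp}) with its Tate twist by $i$; taking kernels then yields the second isomorphism $\wkiw{K}{(i)}\cong \wkiw{K}{}(i)$ and the commutativity of the diagram in the statement.

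The main obstacle lies in the verification mentioned in the second paragraph, namely that the non-equivariance by $\chi^i$ of each $\alpha_{n,m}$ is absorbed in the double limit into the Tate twist of the $\Lambda$-module structure. In dual form this is the same mechanism underlying Nguyen Quang Do's isomorphism $\hdet{o'_K}{(i)}^*\cong t_{\zp}(\mathfrak{X}_{K_\infty}(-i)_\Gamma)$ used in the proof of Proposition \ref{nqd} and in the direct-limit counterpart, Remark \ref{chazadinfty}; the inverse-limit version requires a parallel but independent bookkeeping of the twist.
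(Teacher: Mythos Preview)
Your proposal is correct and follows essentially the same approach as the paper: both arguments exploit that $\mu_{p^m}\subseteq K_n$ for $n\geq m$ to trivialize the coefficient twist at finite level, then pass to the double inverse limit where the resulting non-equivariance becomes precisely the Tate twist of the $\Lambda[\Delta]$-module structure, and finally deduce the wild-kernel isomorphism from the compatible local analogue. The paper presents this as a terse chain of equalities after interchanging the order of the limits, whereas you spell out the bookkeeping of the twist more explicitly; the content is the same.
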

\begin{proof}
We have   
\begin{eqnarray*}
\hdiw{o'_K}{(i)}&=&\lim\limits_{\underset{n}{\longleftarrow}} \lim\limits_{\underset{m}{\longleftarrow}}\hdetp{o'_{K_n}}{m}{(i)}\\
&=&\lim\limits_{\underset{m}{\longleftarrow}} \lim\limits_{\underset{n\geq m}{\longleftarrow}}\hdetp{o'_{K_n}}{m}{(i)}\\ 
&\cong&\lim\limits_{\underset{m}{\longleftarrow}} \lim\limits_{\underset{n\geq m}{\longleftarrow}}\hdetp{o'_{K_n}}{m}{}(i)\quad\textrm{(since $\mu_{p^m}\subseteq K_n$ if $n\geq m$)}\\
&=&\lim\limits_{\underset{n}{\longleftarrow}} \lim\limits_{\underset{m}{\longleftarrow}}\hdetp{o'_{K_n}}{m}{}(i)=\hdiw{o'_K}{}(i).
\end{eqnarray*}
A similar argument shows that, for any $p$-adic prime $w$ of $K$, there is an isomorphism $\hdiw{K_w}{(i)}\cong \hdiw{K_w}{}(i)$ which is compatible with the one above. Hence the statement of the lemma follows. 
\end{proof}

\begin{remark}
If the Iwasawa $\mu$-invariant of $X'_{K_\infty}$ is trivial and $i\geq 2$, there exist isomorphisms of $\Lambda[\Delta]$-mo\-du\-les 
$$\wkin{K}{(i)}\cong \wkiw{K}{(i)}\otimes \qp/\zp,\quad\hdin{o'_K}{(i)}\cong \hdiw{o'_K}{(i)}\otimes \qp/\zp$$
(see \cite[Lemma 2.4]{KM2}). Thus Lemma \ref{chazad} can be used to prove the result of Remark \ref{chazadinfty}.
\end{remark}

We are now ready for the main result of this section.

\begin{teo}\label{main2}
Let $k$ be a number field and $i\geq 2$ be an integer. Suppose that the map 
\begin{equation}\label{mappa}
\alpha:\mathrm{Hom}(\tilde\oplus_{k,Iw},\xK(i-1)_\Delta)_\Gamma \to \mathrm{Hom}(\tilde\oplus_{k,Iw},A'_K(i-1)_\Delta)_\Gamma,
\end{equation} 
induced by the natural map $\xK(i-1)_\Delta\to A'_K(i-1)_\Delta$, is injective.
Then the injective homomorphism $\wket{k}{(i)}\to \hdet{o'_k}{(i)}$ splits.
\end{teo}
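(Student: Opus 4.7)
The plan is to apply Lemma \ref{pavia} to the exact sequence (\ref{seiwsimp}) and then descend to $\iota_k$ via the codescent diagram (\ref{cosquare}).

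First, I would identify $\wkiw{k}{(i)}$ with $\xK(i-1)_{\Delta}$ as a $\Lambda[\Delta]$-module. Schneider's isomorphism gives $\wket{k_n}{(i)}\cong \xK(i-1)_{\mathcal{G}_n}=(\xK(i-1)_\Delta)_{\Gamma_n}$, and since each $\wket{k_n}{(i)}$ is finite (because $i\geq 2$), the $\Lambda$-module $\xK(i-1)_\Delta$ is itself $\Lambda$-torsion with finite $\Gamma_n$-coinvariants. Lemma \ref{ltzpt} then yields a $\Lambda[\Delta]$-isomorphism $\wkiw{k}{(i)}\cong \xK(i-1)_\Delta$.

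Second, (\ref{seiwsimp}) is a short exact sequence of finitely generated compact $\Lambda$-modules that splits in the category of $\zp$-modules (as $\tilde\oplus_{k,Iw}$ is $\zp$-free of finite rank). By Lemma \ref{pavia}, it splits in the category of $\Lambda$-modules if and only if
$$\iota_{k,Iw}\circ\,:\,\mathrm{Hom}(\tilde\oplus_{k,Iw},\wkiw{k}{(i)})_\Gamma \longrightarrow \mathrm{Hom}(\tilde\oplus_{k,Iw},\hdiw{o'_k}{(i)})_\Gamma$$
is injective.

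Third, to derive this injectivity from the hypothesis, I would construct a natural $\Lambda$-homomorphism $\beta:\hdiw{o'_k}{(i)}\to A'_K(i-1)_\Delta$ fitting into a commutative square
$$\begin{CD}
\wkiw{k}{(i)} @>\iota_{k,Iw}>> \hdiw{o'_k}{(i)} \\
@V\wr VV @VV\beta V \\
\xK(i-1)_\Delta @>>> A'_K(i-1)_\Delta
\end{CD}$$
whose bottom arrow is the natural projection $\xK\to(\xK)_\Gamma\to A'_K$ (with kernel $\Psi(K)$), twisted by $(i-1)$ and passed to $\Delta$-coinvariants. Applying the functor $\mathrm{Hom}(\tilde\oplus_{k,Iw},-)_\Gamma$ then factors $\alpha$ through $\iota_{k,Iw}\circ$, so the assumed injectivity of $\alpha$ forces the injectivity of $\iota_{k,Iw}\circ$. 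Finally, the resulting $\Lambda$-linear section of $\iota_{k,Iw}$ descends, by taking $\Gamma$-coinvariants and invoking the codescent isomorphisms (\ref{codescentwh}) summarized in (\ref{cosquare}), to a splitting of $\iota_k:\wket{k}{(i)}\to\hdet{o'_k}{(i)}$ as abelian groups.

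The main obstacle is the third step: the construction of $\beta$ and the verification of the commutativity of the square compatibly with all Iwasawa-theoretic identifications. A plausible route is to combine the codescent projection $\hdiw{o'_k}{(i)}\twoheadrightarrow\hdet{o'_k}{(i)}$, the embedding $\wket{k}{(i)}\hookrightarrow\hdet{o'_k}{(i)}$ from (\ref{sequence}), and the natural map $(\xK)_\Gamma\to A'_K$, and then lift these compatibly to the Iwasawa level, most likely via the Kummer-theoretic/Bockstein construction underlying Schneider's isomorphism.
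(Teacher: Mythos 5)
Your overall architecture coincides with the paper's: reduce to the splitting of (\ref{seiwsimp}) via the codescent isomorphisms (\ref{codescentwh}), identify $\wkiw{k}{(i)}$ with $\xK(i-1)_\Delta$ through Schneider's isomorphism, and apply Lemma \ref{pavia} to translate the splitting into the injectivity of a map on $\mathrm{Hom}(\tilde\oplus_{k,Iw},-)_\Gamma$. Your steps 1, 2 and the final descent to $\iota_k$ are correct.

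The gap is exactly where you locate it, in step 3, and it is not a routine verification. There is no natural $\Lambda$-homomorphism $\hdiw{o'_k}{(i)}\to A'_K(i-1)_\Delta$: the ingredients you propose to combine all point the wrong way (the Keune--Tate-type comparison sends $A'_K(i-1)_\Delta$ \emph{into} the tame kernel, not out of it), so ``lifting compatibly to the Iwasawa level'' does not by itself produce the commutative square you need. The paper circumvents this by enlarging the target instead of mapping onto $A'_K(i-1)_\Delta$: Lemma \ref{chazad} gives $\hdiw{o'_k}{(i)}\cong\hdiw{o'_K}{(1)}(i-1)_{\Delta}$, whence a natural morphism $\hdiw{o'_k}{(i)}\to\hdet{o'_K}{(1)}(i-1)_{\Delta}$ (projection to the bottom layer), and this fits into a commutative square whose top row is the natural map $\xK(i-1)_\Delta\to A'_K(i-1)_\Delta$ and whose right column is the twisted $i=1$ inclusion $A'_K(i-1)_\Delta\hookrightarrow\hdet{o'_K}{(1)}(i-1)_\Delta$. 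The decisive input is that this last inclusion is a \emph{split} injection of $\Lambda$-modules (it identifies $A'_K$ with the $\zp$-torsion of $\hdet{o'_K}{(1)}$, the quotient being $\zp$-free; see the paragraph on the case $i=1$ preceding Lemma \ref{chazad}). Since $\mathrm{Hom}(\tilde\oplus_{k,Iw},-)_\Gamma$ is only right exact, this splitness is precisely what guarantees that injectivity survives the functor, yielding $\ker(\beta)\subseteq\ker(\alpha)$ and hence the conclusion. A map $\beta$ of the kind you want does exist a posteriori --- compose the paper's morphism with a $\Lambda$-linear retraction of the split inclusion --- but its existence rests on the two ingredients (the untwisting isomorphism of Lemma \ref{chazad} and the $i=1$ splitting) that your sketch does not identify, so as written the argument is incomplete at its central point.
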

\begin{proof}
By (\ref{codescentwh}), it suffices to show that the exact sequence (\ref{seiwsimp}) splits in the category of $\Lambda$-modules.
Note that 
$$\wkiw{k}{(i)}\cong \xK(i-1)_\Delta,$$
by Schneider's isomorphism. Then, according to Lemma \ref{pavia}, the exact sequence (\ref{seiwsimp}) splits if and only if the map 
\begin{equation}\label{remplace}
\beta:\mathrm{Hom}(\tilde\oplus_{k,Iw},\xK(i-1)_\Delta)_\Gamma\to\mathrm{Hom}\left(\tilde\oplus_{k,Iw}, \hdiw{o'_k}{(i)}\right)_\Gamma,
\end{equation}
induced by $\iota_{K,Iw}$ via the above isomorphism, is injective. Hence it is sufficient to show that $\mathrm{ker}\left(\beta\right)\subseteq \mathrm{ker}\left(\alpha\right)$.

Using Lemma \ref{chazad}, we get an isomorphism $\hdiw{o'_k}{(i)}\cong\hdiw{o'_K}{(1)}(i-1)_\Delta$ and therefore a morphism $\hdiw{o'_k}{(i)}\to \hdet{o'_K}{(1)}(i-1)_{\Delta}$ fitting into a commutative diagram of $\Lambda$-modules
$$
\begin{CD}
\xK(i-1)_\Delta @>>> A'_K(i-1)_\Delta\\
@VVV@VVV\\
\hdiw{o'_k}{(i)} @>>>\hdet{o'_K}{(1)}(i-1)_{\Delta}
\end{CD}
$$
whose vertical arrows are injective. Applying the functor $\mathrm{Hom}(\tilde\oplus_{k,Iw},\--)_\Gamma$ to the above diagram, we get a commutative diagram 
$$
\begin{CD}
\mathrm{Hom}(\tilde\oplus_{k,Iw},\xK(i-1)_\Delta)_\Gamma @>\alpha>> \mathrm{Hom}(\tilde\oplus_{k,Iw},A'_K(i-1)_\Delta)_\Gamma\\
@V\beta VV@VVV\\
\mathrm{Hom}(\tilde\oplus_{k,Iw},\hdiw{o'_k}{(i)})_\Gamma @>>>\mathrm{Hom}(\tilde\oplus_{k,Iw},\hdet{o'_K}{(1)}(i-1)_{\Delta})_\Gamma.
\end{CD}
$$
According to the discussion prior to Lemma \ref{pavia}, the injection $A'_K(i-1)_\Delta\to \hdet{o'_K}{(1)}(i-1)_\Delta$ splits as a homomorphism of $\Lambda$-modules. Therefore the same holds for the injection $\mathrm{Hom}(\tilde\oplus_{k,Iw},A'_K(i-1)_\Delta)\to\mathrm{Hom}(\tilde\oplus_{k,Iw},\hdet{o'_K}{(1)}(i-1)_{\Delta})$. Hence the right vertical arrow of the above diagram is injective and we have $\mathrm{ker}\left(\beta\right)\subseteq \mathrm{ker}\left(\alpha\right)$, as desired.
\end{proof}


The condition of Theorem \ref{main2}, despite its aspect, can be simplified in several concrete situations, as the following result shows.

\begin{cor}\label{simplecase}
Suppose that $n_0(K)=0$ (\textit{i.e.} all $p$-adic places are totally ramified in $K_\infty/K$). Then, if $\Psi(K)(i-1)^\Delta=0$, the inclusion $\wket{k}{(i)}\to \hdet{o'_k}{(i)}$ splits.
\end{cor}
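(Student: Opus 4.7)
The plan is to invoke Theorem~\ref{main2}, reducing the claim to the injectivity of the map $\alpha$ therein. Since $|\Delta|$ divides $p-1$ and $p$ is odd, $|\Delta|$ is coprime to $p$, so on $\zp[\Delta]$-modules $\Delta$-invariants and $\Delta$-coinvariants coincide; the hypothesis thus reads equivalently $\Psi(K)(i-1)_\Delta=0$.

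Under $n_0(K)=0$, the natural map $(\xK)_\Gamma\twoheadrightarrow A'_K$ is surjective with kernel $\Psi(K)$ by the very definition of $\Psi(K)$. Twisting the resulting short exact sequence by $\zp(i-1)$ and taking $\Delta$-coinvariants (an exact functor since $|\Delta|$ is coprime to $p$) yields
$$0\to\Psi(K)(i-1)_\Delta\to(\xK)_\Gamma(i-1)_\Delta\to A'_K(i-1)_\Delta\to 0,$$
and the hypothesis forces the leftmost term to vanish, producing an isomorphism $(\xK)_\Gamma(i-1)_\Delta\cong A'_K(i-1)_\Delta$ of $\Lambda$-modules. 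Consequently, the natural morphism $\xK(i-1)_\Delta\to A'_K(i-1)_\Delta$ appearing in Theorem~\ref{main2} factors as
$$\xK(i-1)_\Delta\twoheadrightarrow(\xK)_\Gamma(i-1)_\Delta\xrightarrow{\sim}A'_K(i-1)_\Delta,$$
and the injectivity of $\alpha$ reduces to that of
$$\bar\alpha:\mathrm{Hom}\bigl(\tilde\oplus_{k,Iw},\xK(i-1)_\Delta\bigr)_\Gamma\to\mathrm{Hom}\bigl(\tilde\oplus_{k,Iw},(\xK)_\Gamma(i-1)_\Delta\bigr)_\Gamma$$
induced by the canonical projection $\xK(i-1)_\Delta\twoheadrightarrow(\xK)_\Gamma(i-1)_\Delta$.

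The main obstacle is proving that $\bar\alpha$ is injective. Since $\tilde\oplus_{k,Iw}$ is $\zp$-free, the functor $\mathrm{Hom}(\tilde\oplus_{k,Iw},-)$ is exact, and applied to the short exact sequence
$$0\to L\to\xK(i-1)_\Delta\to(\xK)_\Gamma(i-1)_\Delta\to 0$$
(with $L=((\gamma-1)\xK)(i-1)_\Delta$ for $\gamma$ a topological generator of $\Gamma$) it produces another short exact sequence. The long exact sequence of $\Gamma$-homology then identifies the injectivity of $\bar\alpha$ with the surjectivity of
$$\mathrm{Hom}_\Lambda\bigl(\tilde\oplus_{k,Iw},\xK(i-1)_\Delta\bigr)\to\mathrm{Hom}_\Lambda\bigl(\tilde\oplus_{k,Iw},(\xK)_\Gamma(i-1)_\Delta\bigr).$$
This lifting of $\Lambda$-homomorphisms should follow from the explicit structure of $\tilde\oplus_{k,Iw}$: by local Tate duality, $\hdiw{k_v}{(i)}\cong\zp(i-1)$ as a $\Gamma_v$-module, so $\tilde\oplus_{k,Iw}$ is a $\Lambda$-submodule of a direct sum of modules induced from $\chi^{i-1}$-eigenlines of the local decomposition groups $\Gamma_v$. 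A $\Lambda$-homomorphism to the target then corresponds to a collection of $\chi^{i-1}(\gamma_v)$-eigenvectors, and these can be lifted through the projection $\xK\twoheadrightarrow(\xK)_\Gamma$ eigenspace-by-eigenspace.
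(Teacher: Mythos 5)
Your reduction to Theorem \ref{main2}, and the factorization of the map $\xK(i-1)_\Delta\to A'_K(i-1)_\Delta$ through the isomorphism $(\xK)_\Gamma(i-1)_\Delta\cong A'_K(i-1)_\Delta$ (valid because $n_0(K)=0$ gives surjectivity and the hypothesis kills the kernel $\Psi(K)(i-1)_\Delta$), are both fine. The gap is in the final step. Write $A=\mathrm{Hom}(\tilde\oplus_{k,Iw},L)$, $B=\mathrm{Hom}(\tilde\oplus_{k,Iw},\xK(i-1)_\Delta)$, $C=\mathrm{Hom}(\tilde\oplus_{k,Iw},(\xK)_\Gamma(i-1)_\Delta)$. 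The six-term sequence for the procyclic group $\Gamma$ reads
$$0\to A^\Gamma\to B^\Gamma\to C^\Gamma\stackrel{\delta}{\longrightarrow}A_\Gamma\to B_\Gamma\to C_\Gamma\to 0,$$
so the injectivity of $\bar\alpha:B_\Gamma\to C_\Gamma$ is equivalent to the \emph{surjectivity of the connecting map} $\delta:C^\Gamma\to A_\Gamma$, not to the surjectivity of $B^\Gamma\to C^\Gamma$; the latter is equivalent to $\delta=0$, i.e.\ to the injectivity of the \emph{other} map $A_\Gamma\to B_\Gamma$, and the two conditions are incompatible unless $A_\Gamma=0$. (Compare Lemma \ref{pavia}, where it is the \emph{first} coinvariants map whose injectivity matches the surjectivity of the last invariants map.) Worse, the condition you reduce to is generally false: since $\tilde\oplus_{k,Iw}$ is a sum of copies of $\zp(i-1)$, a $\Lambda$-homomorphism $\zp(i-1)\to\xK(i-1)_\Delta$ is an element of $(\xKj{1-i})^\Gamma$, a $\Lambda$-homomorphism $\zp(i-1)\to(\xK)_\Gamma(i-1)_\Delta$ is an arbitrary element of $(\xKj{1-i})_\Gamma$ (the $\Gamma$-action on the target is via $\chi^{i-1}$, so invariance imposes no condition), and the map between them is the composite $(\xKj{1-i})^\Gamma\hookrightarrow\xKj{1-i}\twoheadrightarrow(\xKj{1-i})_\Gamma$. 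Already for a module of the form $\Lambda/(T-p)$ this is the zero map onto a nonzero target, so the proposed ``eigenspace-by-eigenspace lifting'' does not exist in general. The statement that $\bar\alpha$ is injective is true under the stated hypotheses, but your argument does not establish it.

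The paper's proof avoids isolating the projection $\xK\to(\xK)_\Gamma$: it applies $\mathrm{Hom}(-,\xK(i-1)_\Delta)$ and $\mathrm{Hom}(-,A'_K(i-1)_\Delta)$ to the sequence $0\to\tilde\oplus_{k,Iw}\to\oplus_{k,Iw}\to Q\to0$, identifies the first two columns of the resulting diagram, after taking $\Gamma$-coinvariants, with copies of the single map $(\xKj{1-i})_\Gamma\to\aKj{1-i}$, and then runs the snake lemma: injectivity of the middle column (from $\Psi(K)^{[1-i]}=0$) yields $\ker(\alpha)\hookrightarrow\mathrm{coker}\left((\xKj{1-i})_\Gamma\to\aKj{1-i}\right)$, and this cokernel vanishes because $n_0(K)=0$ makes $\xK\to A'_K$ surjective. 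If you want to keep your factorization, you would instead have to show directly that the image of $A_\Gamma$ in $B_\Gamma$ vanishes, and that is exactly where the arithmetic input has to re-enter.
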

\begin{proof}

Since $Q:=\hzin{k}{(1-i)}^*$ is either trivial or a free $\zp$-module, the exact sequence of $\Lambda$-modules 
$$0\to \tilde\oplus_{k,Iw}\to\oplus_{k,Iw}\to Q\to 0$$
splits in the category of $\zp$-modules. We thus get a commutative diagram of $\Lambda$-modules with exact rows
$$
\xymatrix{
0\ar[r]&\mathrm{Hom}(Q,\xK(i-1)_\Delta) \ar[d] \ar[r] & \mathrm{Hom}(\oplus_{k,Iw},\xK(i-1)_\Delta)\ar[d] \ar[r] &\mathrm{Hom}(\tilde\oplus_{k,Iw},\xK(i-1)_\Delta)\ar[d]\ar[r]&0\,\,\\
0\ar[r]&\mathrm{Hom}(Q,A'_K(i-1)_\Delta)\ar[r] & \mathrm{Hom}(\oplus_{k,Iw},A'_K(i-1)_\Delta) \ar[r] &\mathrm{Hom}(\tilde\oplus_{k,Iw},A'_K(i-1)_\Delta)\ar[r]&0.\\}
$$
Now note that
\begin{eqnarray*}
Q=\hzin{k}{(1-i)}^*&\cong&\left\{\begin{array}{ll}\zp(i-1)&\textrm{if $i\equiv 1 \pmod{[K:k]}$}\\0&\textrm{otherwise,}\end{array}\right. \quad \textrm{as $\Gamma$-modules,}\\
\hdiw{k_v}{(i)}&\cong&\left\{\begin{array}{ll}\zp(i-1)&\textrm{if $i\equiv 1 \pmod{[k_v(\mu_p):k_v]}$}\\0&\textrm{otherwise,}\end{array}\right. \quad \textrm{as $\Gamma_v$-modules,}
\end{eqnarray*}
where $\Gamma_v=\mathrm{Gal}(k_{v,\infty}/k_v)$ is the Galois group of the cyclotomic $\zp$-extension of $k_v$. Since, by hypothesis, all $p$-adic places are totally ramified in $K_\infty/K$ (and hence in $k_\infty/k$), we have $\Gamma=\Gamma_v$ and an isomorphism of $\Lambda$-modules
$$\oplus_{k,Iw}\cong \underset{v\mid p}{\oplus^\prime}\zp(i-1),$$
where the module on the right-hand side is the sum of the $\Lambda$-module $\zp(i-1)$ over the $p$-adic places $v$ of $k$ such that $i\equiv 1 \pmod{[k_v(\mu_p):k_v]}$.

For every $\Lambda[\Delta]$-module $M$ and every $j\in \mathbb{Z}$, there is a natural isomorphism of $\Lambda[\Delta]$-modules $M(-j)\cong \mathrm{Hom}(\zp(j),M)$. Moreover, $M(j)_\Delta(-j)\cong M^{[-j]}$ as $\Lambda[\Delta]$-modules. As usual, here $M^{[-j]}$ denotes the eigenspace of $M$ where $\Delta$ acts as multiplication by $\omega^{-j}$, $\omega:\Delta\to \zp^\times$ being the Teichm\"uller character. In particular, if $i\equiv 1 \pmod{[K:k]}$, then obviously $i\equiv 1 \pmod{[k_v(\mu_p):k_v]}$ for every $p$-adic place $v$ and from the above diagram we deduce a commutative diagram of $\Lambda$-modules with exact rows
$$
\begin{CD}
0@>>>\xKj{1-i} @>>> \underset{v\mid p}{\oplus^\prime}\xKj{1-i} @>>>\mathrm{Hom}(\tilde\oplus_{k,Iw},\xK(i-1)_\Delta)@>>>0\,\,\\
@.@VVV@VVV@VVV\\
0@>>>\aKj{1-i} @>>> \underset{v\mid p}{\oplus^\prime}\aKj{1-i}@>>>\mathrm{Hom}(\tilde\oplus_{k,Iw},A'_K(i-1)_\Delta)@>>>0.
\end{CD}
$$
If instead $i\not \equiv 1 \pmod{[K:k]}$, the terms in the left-hand column are trivial. We shall continue the proof in the case where $i\equiv 1 \pmod{[K:k]}$, the case $i\not\equiv 1 \pmod{[K:k]}$ follows by similar arguments. Of course if $i\equiv 1 \pmod{[K:k]}$, then $\oplus^\prime_{v\mid p}\zp(i-1)=\oplus_{v\mid p}\zp(i-1)$.
Then taking $\Gamma$-coinvariants in the above diagram we get a commutative diagram of $\zp$-modules with exact rows
$$
\begin{CD}
@.(\xKj{1-i})_\Gamma @>>> \underset{v\mid p}{\oplus}(\xKj{1-i})_\Gamma @>>>\mathrm{Hom}(\tilde\oplus_{k,Iw},\xK(i-1)_\Delta)_\Gamma @>>>0\,\,\\
@.@VVV@VVV@VV\alpha V\\
0@>>>\aKj{1-i} @>>> \underset{v\mid p}{\oplus}\aKj{1-i} @>>>\mathrm{Hom}(\tilde\oplus_{k,Iw},A'_K(i-1)_\Delta)_\Gamma @>>>0,
\end{CD}
$$
where $\alpha$ is the map of Theorem \ref{main2}. 
Note that $\ker\left((\xKj{1-i})_\Gamma\to \aKj{1-i}\right)=\Psi(K)^{[1-i]}=0$ by hypothesis.
Hence the snake lemma gives an inclusion
$$\ker\left(\alpha
\right)\hookrightarrow \mathrm{coker}\left((\xKj{1-i})_{\Gamma}\to \aKj{1-i} \right).$$
Since $n_0(K)=0$, the map $\xK\to A'_K$ is surjective so the corollary follows by Theorem \ref{main2}. 
\end{proof}

\begin{remark}
It is interesting to compare the hypotheses of Theorem \ref{randonnee} with those of Corollary \ref{simplecase} (and hence of Theorem \ref{main2}). If $\Psi(K)(i-1)^\Delta=0$, then of course $\Psi(K_\infty)(i-1)^\Delta=0$ by Lemma \ref{CPsi}. On the other hand, even if $\Psi(K_\infty)(i-1)^\Delta=0$ and $\hdet{o'_k}{(i)}\to \hdin{o'_k}{(i)}^\Gamma$ splits, $\Psi(K)(i-1)^\Delta$ may be nontrivial. For example, consider the case where $p=3$, $i=2$ and $k=\mathbb{Q}(\sqrt{-3\cdot 3739})$ (for which $n_0(K)=0$). Then the PARI program developed by Browkin and Gangl (see \cite{BG}) gives $\#H^2_{\acute{e}t}(o'_k,\mathbb{Z}_3(2))=3^2$. Moreover $k$ has one prime $v$ above $3$ and $k_v=\mathbb{Q}_3(\mu_3)$. In particular there is an exact sequence
$$0\to \left(A'_K/3(1)\right)_\Delta\to H^2_{\acute{e}t}(o'_k,\mathbb{Z}_3(2))/3\to \mathbb{Z}/3\mathbb{Z}\to 0$$
(see \cite[proof of Theorem 6.6]{Ke} or \cite[Theorem 6.2]{Ta2}). Using PARI, we get $\#\left(A'_K/3(1)\right)_\Delta=3$ and therefore $H^2_{\acute{e}t}(o'_k,\mathbb{Z}_3(2))$ has $3$-rank $2$, giving $H^2_{\acute{e}t}(o'_k,\mathbb{Z}_3(2))\cong \mathbb{Z}/3\times\mathbb{Z}/3\mathbb{Z}$. This immediately implies that the surjection $H^2_{\acute{e}t}(o'_k,\mathbb{Z}_3(2))\to H^2_{\infty}(o'_k,\mathbb{Z}_3(2))^\Gamma$ splits. Furthermore $\Psi(K_\infty)(i-1)^\Delta\cong\Psi(k'_\infty)(i-1)$, where $k'=\mathbb{Q}(\sqrt{3739})$ is the totally real subfield of $K=k(\mu_3)$. In particular $\Psi(K_\infty)(i-1)^\Delta=0$ by Remark \ref{oujda}. On the other hand, using again PARI and the methods of \cite[Section 4.3]{LMN}, we get $\Psi(K)(i-1)^\Delta\cong\Psi(k')(i-1)\ne 0$. Note that this also shows that the converse of Corollary \ref{simplecase} does not hold in general.   
\end{remark}

\section{A general criterion using class groups}\label{crit}
In this section we will prove a criterion for the inclusion $\wket{k}{(i)}\to \hdet{o'_k}{(i)}$ to split in terms of the triviality of some codescent kernels which are somehow reminiscent of the $\Psi(K_n)$. In fact we shall use this criterion to give a different proof of Corollary \ref{simplecase}.

For every $i\geq 1$ and every $m\in \mathbb{N}$, let $k(\mu_{p^m}^{\otimes i})$ be the subfield of $k(\mu_{p^m})$ which is fixed by the kernel of the homomorphism $\mathrm{Gal}(k(\mu_{p^m})/k)\to \mathrm{Aut}(\mu_{p^m}^{\otimes i})$, induced by the action of $\mathrm{Gal}(k(\mu_{p^m})/k)$ on $\mu_{p^m}^{\otimes i}$. 
For this section, we shall set $j=i-1$. We also recall the notation $\mathcal{G}_n=\mathrm{Gal}(K_\infty/k_n)$ (and $\mathcal{G}_0=\mathcal{G}$) and we set
$$\wketp{k}{m}{(i)}=\ker\left(\hdetp{o'_k}{m}{(i)}\to \underset{v\mid p}{\oplus} \hdetp{k_v}{m}{(i)}\right).$$

The following well-known lemma indicates the strategy of our criterion.

\begin{lemma}\label{splitpure}
The following conditions are equivalent: 
\begin{itemize}
\item $\wket{k}{(i)}\to \hdet{o'_k}{(i)}$ splits;
\item for every $m\in\mathbb{N}$, the map $\wket{k}{(i)}/p^m\to \hdet{o'_k}{(i)}/p^m$ is injective.
\end{itemize}
\end{lemma}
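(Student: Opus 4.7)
The lemma is essentially a group-theoretic statement in disguise. For $i\geq 2$, both $A:=\wket{k}{(i)}$ and $B:=\hdet{o'_k}{(i)}$ are finite abelian $p$-groups (as recalled in the introduction) and $\iota_k$ realizes $A$ as a subgroup of $B$; the question is whether this inclusion of finite abelian $p$-groups admits a retraction. So my plan would be to forget the arithmetic and argue purely at the level of finite abelian $p$-groups.

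The forward implication is trivial: a retraction $\sigma\colon B\to A$ of $\iota_k$ descends to a retraction $B/p^m\to A/p^m$ of the induced map $A/p^m\to B/p^m$, so the latter is injective.

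For the converse, I would observe that the injectivity of $A/p^m\to B/p^m$ for every $m$ unpacks to the equality $A\cap p^m B = p^m A$ for every $m$, which is precisely the classical condition that $A$ be a \emph{pure} subgroup of $B$. A well-known theorem (due to Pr\"ufer in the bounded case and extended by Kulikov in general) asserts that a pure subgroup of an abelian $p$-group of bounded exponent is a direct summand. Applying this to $A\subset B$, with $B$ finite and hence a fortiori of bounded exponent, yields the desired splitting.

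The only point requiring care is the purity--summand theorem itself; a self-contained argument proceeds by induction on $|B|$. Choose a cyclic summand $\langle a\rangle$ of $A$ of maximal order $p^e$; purity forces $a$ to still have order $p^e$ in $B$ and $\langle a\rangle$ to be pure of exponent equal to its own order in $B$, which makes $\langle a\rangle$ a direct summand of $B$. The inclusion $A/\langle a\rangle\hookrightarrow B/\langle a\rangle$ inherits purity, and the inductive hypothesis completes the argument. In short, the ``hard part'' is not really hard: it is essentially the recognition that the stated condition is the definition of purity, after which the conclusion is a standard fact.
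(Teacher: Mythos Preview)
Your argument is correct and follows essentially the same route as the paper: both recognise that the second condition is precisely purity of $\wket{k}{(i)}$ in $\hdet{o'_k}{(i)}$ and then invoke the classical result that a pure subgroup of a bounded abelian group is a direct summand (the paper cites Kaplansky, you cite Pr\"ufer--Kulikov). Your additional sketch of the inductive proof of the purity--summand theorem is a nice bonus but not needed for the comparison.
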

\begin{proof}
The second condition can be rephrased by saying that $\wket{k}{(i)}$ is pure in $\hdet{o'_k}{(i)}$ (see \cite[Chapter 7]{Ka}). Then the lemma follows by \cite[Theorem 5]{Ka} and the (obvious) fact that a direct summand of an abelian group is a pure subgroup.  
\end{proof}

For any $m\in \mathbb{N}$ and any $i\geq 1$, set 
$$\Psi(k,p^m,i):=\mathrm{ker}\left(\wket{k}{(i)}/p^m\to \hdet{o'_k}{(i)}/p^m\right).$$ 
This notation is reminiscent of $\Psi(k)=\mathrm{ker}\left((\xk)_\Gamma\to A'_k\right)$ and the next proposition partially explains this relation.  We will need the following observation: for every $i\ne 1$, Schneider's isomorphism 
$\wket{k}{(i)}\cong \left(\xK(j)\right)_{\mathcal{G}}$ can also be obtained taking projective limits over $m>0$ of the isomorphisms
\begin{equation}\label{modpn}
\wketp{k}{m}{(i)}\cong \left(A_{k(\mu_{p^m}^{\otimes j})}'/p^m(j)\right)_{\mathrm{Gal}(k(\mu_{p^m}^{\otimes j})/k)}.
\end{equation} 
Using the definition of $k(\mu_{p^m}^{\otimes j})$, the isomorphism (\ref{modpn}) follows from class field theory and Poitou-Tate duality (see \cite[Section 3]{Ca}).

\begin{prop}
For every integer $m$ and $j+1=i\geq 2$, there is an exact sequence
$$0\to \Psi(k,p^m,i)\to \left(\xK/p^m(j)\right)_{\mathcal{G}}\to \left(A'_{k(\mu_{p^m}^{\otimes j})}/p^m(j)\right)_{\mathrm{Gal}(k(\mu_{p^m}^{\otimes j})/k)}\to D_{m,i}\to 0$$
where $D_{m,i}$ is the Pontryagin dual of the kernel of $\hzet{k}{(-j)}/p^m\to\underset{v\mid p}{\oplus}\hzet{k_v}{(-j)}/p^m$ and is trivial if $i\not\equiv1 \pmod{[K:k]}$. 
\end{prop}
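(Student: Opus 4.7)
The plan is to realize the desired sequence as the $\ker$--$\mathrm{coker}$ sequence of the natural map
$$\nu:\wket{k}{(i)}/p^m\longrightarrow\wketp{k}{m}{(i)}$$
obtained from the inclusion $\wket{k}{(i)}\hookrightarrow\hdet{o'_k}{(i)}$ by reducing modulo $p^m$. To define $\nu$, I would use the Bockstein identification $\hdet{o'_k}{(i)}/p^m\cong \hdetp{o'_k}{m}{(i)}$, valid because $H^3_{\acute{e}t}(o'_k,\zp(i))=0$ for odd $p$ (as $\mathrm{cd}_p(o'_k)\leq 2$), together with the fact that the composition $\wket{k}{(i)}\to\hdet{o'_k}{(i)}\to \oplus_v\hdet{k_v}{(i)}$ is zero by definition, so the image of $\wket{k}{(i)}/p^m$ in $\hdet{o'_k}{(i)}/p^m$ lies inside $\wketp{k}{m}{(i)}$.

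Schneider's isomorphism $\wket{k}{(i)}\cong(\xK(j))_{\mathcal{G}}$ combined with the right-exactness of coinvariants gives $\wket{k}{(i)}/p^m\cong (\xK/p^m(j))_{\mathcal{G}}$, and the isomorphism (\ref{modpn}) quoted just above the proposition gives $\wketp{k}{m}{(i)}\cong (A'_{k(\mu_{p^m}^{\otimes j})}/p^m(j))_{\mathrm{Gal}(k(\mu_{p^m}^{\otimes j})/k)}$. By definition, $\ker(\nu)=\ker(\wket{k}{(i)}/p^m\to\hdet{o'_k}{(i)}/p^m)=\Psi(k,p^m,i)$, which gives the first arrow of the proposition.

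The cokernel computation is the crux. I would split (\ref{sequence}) as $0\to W\to A\to Y\to 0$ and $0\to Y\to B\to C\to 0$, where $W=\wket{k}{(i)}$, $A=\hdet{o'_k}{(i)}$, $B=\oplus_v\hdet{k_v}{(i)}$, $C=\hzet{k}{(1-i)}^*$, and apply the $\mathrm{Tor}_1(\mathbb{Z}/p^m,-)$ long exact sequence to each. A short diagram chase yields
$$\mathrm{coker}(\nu)\cong \ker(Y/p^m\to B/p^m)\cong \mathrm{coker}(B[p^m]\to C[p^m]).$$
Local duality $\hdet{k_v}{(i)}\cong\hzet{k_v}{(1-i)}^*$ then Pontryagin-dualizes $B[p^m]\to C[p^m]$ to $\hzet{k}{(1-i)}/p^m\to \oplus_v\hzet{k_v}{(1-i)}/p^m$, identifying $\mathrm{coker}(\nu)$ with $D_{m,i}$.

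For the last claim: if $i\not\equiv 1\pmod{[K:k]}$, then $\Delta=\mathrm{Gal}(K/k)$ acts on $\qp/\zp(1-i)$ through the nontrivial character $\omega^{1-i}$. Since $[K:k]$ divides $p-1$, any nontrivial value $\omega^{1-i}(\delta)$ is a nontrivial $(p-1)$-th root of unity, so $\omega^{1-i}(\delta)-1\in\zp^\times$; this forces $(\qp/\zp(1-i))^\Delta=0$, hence $\hzet{k}{(1-i)}=0$ and $D_{m,i}=0$. The main obstacle is the diagram chase producing the cokernel identification, together with the Pontryagin-dual bookkeeping that converts $\mathrm{coker}(B[p^m]\to C[p^m])$ into $D_{m,i}$; all other steps reduce to standard references and elementary Galois-theoretic computations.
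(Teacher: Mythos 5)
Your proof is correct and follows essentially the same route as the paper: reduce the Poitou--Tate sequence (\ref{sequence}) modulo $p^m$, identify the mod-$p^m$ wild and tame kernels via $cd_p\leq 2$, Schneider's isomorphism and (\ref{modpn}), and compute the cokernel as $\mathrm{coker}\left(B[p^m]\to C[p^m]\right)$, dualized by local duality. The only difference is organizational --- you split the four-term sequence into two short exact sequences and use $\mathrm{Tor}$ long exact sequences where the paper runs the snake lemma on two explicit commutative diagrams comparing $\zp(i)$- and $\mathbb{Z}/p^m\mathbb{Z}(i)$-coefficients --- and the resulting computation is the same.
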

\begin{proof} 
Taking cohomology of the exact sequence 
$$0\to \zp(i)\stackrel{p^m}{\longrightarrow}\zp(i)\to\mathbb{Z}/p^m\mathbb{Z}(i)\to 0$$
and using the long exact sequence of Poitou-Tate, we get the following commutative diagram with exact rows
$$
\begin{CD}
0@>>>\Psi(k,p^m,i)@>>>\wket{k}{(i)}/p^m @>>> \hdet{o'_k}{(i)}/p^m @>>> \tilde\oplus_k/p^m @>>>0\,\,\\
@.@.@VVV@VVV@VVV\\
@.0@>>> \wketp{k}{m}{(i)} @>>> \hdetp{o'_k}{m}{(i)}@>>>\tilde\oplus_{k,p^m}@>>>0, 
\end{CD}
$$
where 
\begin{eqnarray*}
\tilde\oplus_{k}=&\!\!\!\!\!\!\!&\mathrm{ker}\left(\underset{v\mid p}{\oplus}\hdet{k_v}{(i)}\to \hzet{k}{(-j)}^*\right),\\
\tilde\oplus_{k,p^m}=&\!\!\!\!\!\!\!&\mathrm{ker}\left(\underset{v\mid p}{\oplus}\hdetp{k_v}{m}{(i)}\to \hzetp{k}{m}{(-j)}^*\right).
\end{eqnarray*}
The vertical arrow in the middle is an isomorphism, since $cd_p(Spec(o'_k))\leq 2$. Then the snake lemma and the discussion prior to Lemma \ref{splitpure} give the exact sequence 
$$0\to \Psi(k,p^m,i)\to \left(\xK/p^m(j)\right)_{\mathcal{G}}\to \left(A'_{k(\mu_{p^m}^{\otimes j})}/p^m(j)\right)_{\mathrm{Gal}(k(\mu_{p^m}^{\otimes j})/k)}\to D_{m,i}\to 0,$$
where $D_{m,i}=\ker\left(\tilde\oplus_k/p^m\to \tilde\oplus_{k,p^m}\right)$. We also have a commutative diagram with exact rows and columns
$$
\begin{CD}
@.0\\
@.@VVV\\
@.D_{m,i}\\
@.@VVV\\
@.\tilde \oplus_k/p^m @>>> \underset{v\mid p}{\oplus}\hdet{k_v}{(i)}/ p^m @>>> \hzet{k}{(-j)}^*/p^m @>>>0\,\,\\
@.@VVV@VVV@VVV\\
0@>>> \tilde\oplus_{k,p^m} @>>> \underset{v\mid p}{\oplus}\hdetp{k_v}{m}{(i)} @>>>\hzetp{k}{m}{(-j)}^*@>>>0 .
\end{CD}
$$
Since $cd_p(Spec(k_v))\leq 2$ for any $v\mid p$, we deduce that the vertical arrow in the middle is an isomorphism. Hence, using the snake lemma and local duality,
\begin{eqnarray*}
D_{m,i}&=&\ker\left(\tilde\oplus_k/p^m \to \underset{v\mid p}{\oplus}\hdet{k_v}{(i)}/p^m\right)\\
&=&\mathrm{coker}\left( \underset{v\mid p}{\oplus}\hdet{k_v}{(i)}[p^m]\to \hzet{k}{(-j)}^*[p^m]\right)\\
&=&\mathrm{coker}\left( \underset{v\mid p}{\oplus}\hzet{k_v}{(-j)}^*[p^m]\to \hzet{k}{(-j)}^*[p^m]\right)\\
&=&\mathrm{coker}\left( \underset{v\mid p}{\oplus}\left(\hzet{k_v}{(-j)}/p^m\right)^*\to \left(\hzet{k}{(-j)}/p^m\right)^*\right)\\
&=&\left(\ker\left(\hzet{k}{(-j)}/p^m\to\underset{v\mid p}{\oplus}\hzet{k_v}{(-j)}/p^m \right)\right)^*,
\end{eqnarray*}
where for an abelian group $M$, we denote by $M[p^m]$ the elements of $M$ annihilated by $p^m$. Note that, if $i\not\equiv1 \pmod{[K:k]}$, then $\hzet{k}{(-j)}=0$ and hence $D_{m,i}$ is trivial for every $m\in \mathbb{N}$. 
\end{proof}

The above proposition and Lemma \ref{splitpure} give the main result of this section.
 
\begin{teo}\label{criterion}
Let $k$ be a number field. For any integer $m$ and $j+1=i\geq 2$, the inclusion $\wket{k}{(i)}\to \hdet{o'_k}{(i)}$ splits if and only if
\begin{equation}\label{comeinca}
\mathrm{ker}\left(\left(\xK/p^m(j)\right)_{\mathcal{G}}\to \left(A'_{k(\mu_{p^m}^{\otimes j})}/p^m(j)\right)_{\mathrm{Gal}(k(\mu_{p^m}^{\otimes j})/k)}\right)=0,\quad \textrm{for every $m\in\mathbb{N}$.}
\end{equation}
\end{teo}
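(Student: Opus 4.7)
The plan is to combine the criterion of Lemma \ref{splitpure} with the exact sequence provided by the proposition immediately preceding the theorem. Indeed, by Lemma \ref{splitpure}, the inclusion $\wket{k}{(i)} \to \hdet{o'_k}{(i)}$ splits if and only if, for every $m \in \mathbb{N}$, the induced map $\wket{k}{(i)}/p^m \to \hdet{o'_k}{(i)}/p^m$ is injective, i.e. if and only if the group $\Psi(k,p^m,i)$ vanishes for every $m$.

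Next, I would appeal to the exact sequence from the preceding proposition, namely
\begin{equation*}
0 \to \Psi(k,p^m,i) \to \left(\xK/p^m(j)\right)_{\mathcal{G}} \to \left(A'_{k(\mu_{p^m}^{\otimes j})}/p^m(j)\right)_{\mathrm{Gal}(k(\mu_{p^m}^{\otimes j})/k)} \to D_{m,i} \to 0.
\end{equation*}
The left exactness of this sequence identifies $\Psi(k,p^m,i)$ with the kernel appearing in condition (\ref{comeinca}). Thus the vanishing of $\Psi(k,p^m,i)$ for every $m$ is precisely equivalent to (\ref{comeinca}) holding for every $m$.

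Combining these two equivalences yields the theorem, so there is essentially no additional work beyond assembling the two results. There is no genuine obstacle here: everything substantive has already been done in the proposition (which builds upon Schneider's description (\ref{modpn}) and a diagram-chase coming from the Poitou--Tate sequence) and in the well-known characterization of pure subgroups recorded in Lemma \ref{splitpure}. The main content is realizing that $\Psi(k,p^m,i)$, which by definition measures the failure of $\wket{k}{(i)}/p^m \to \hdet{o'_k}{(i)}/p^m$ to be injective, coincides under Schneider's isomorphism with the codescent-type kernel in (\ref{comeinca}).
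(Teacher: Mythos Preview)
Your proposal is correct and matches the paper's approach exactly: the paper states that the theorem follows directly from the preceding proposition together with Lemma \ref{splitpure}, which is precisely the combination you describe.
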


\begin{remark}
It is interesting to compare (\ref{comeinca}) with the condition
$$\left(A'_{k(\mu_{p^m}^{\otimes j})}/p^m(j)\right)_{\mathrm{Gal}(k(\mu_{p^m}^{\otimes j})/k)}=0,\quad \textrm{for every $m\in\mathbb{N}$,}
$$
which is equivalent to the splitting of the inclusion $\hdet{o'_k}{(i)}\to H^2_{cont}(k,\zp(i))$ by \cite[Theorem 1]{Ca}.
\end{remark}

Using Theorem \ref{criterion}, we can give a different proof of Corollary \ref{simplecase}.
 
\begin{proof}[Alternate proof of Corollary \ref{simplecase}.]
Fix $m\in \mathbb{N}$ and write $[k(\mu_{p^m}^{\otimes j}):k]=d_jp^n$, with $(d_j,p)=1$. Then $n$ is the smallest integer such that $k(\mu_{p^m}^{\otimes j})\subseteq K_n$ (in particular $\mathrm{Gal}(K_\infty/k(\mu_{p^m}^{\otimes i}))$ acts trivially on $\mathbb{Z}/p^m\mathbb{Z}(j)$). 
Since $n_0(K)=0$, we have an exact sequence 
$$0\to \Psi(K_n)\to (\xK)_{\Gamma_n}\to A'_{K_n}\to 0.$$
Taking $(-j)$-components with respect to the action of $\Delta$, we get an exact sequence
$$0\to \Psi(K_n)^{[-j]}\to (\xKj{-j})_{\Gamma_n}\to \aKnj{-j}\to 0$$
(note that the actions of $\Gamma_n$ and $\Delta$ commute since $\mathcal{G}_n$ is abelian). Since $\Psi(K)(j)^{\Delta}=0$, we deduce that $\Psi(K_n)(j)^{\Delta}=\Psi(K_n)^{[-j]}=0$ by Lemma \ref{CPsi}. Therefore the above exact sequence reduces to an isomorphism 
$$(\xKj{-j})_{\Gamma_n}\cong \aKnj{-j}.$$
Now applying the functor $\otimes \mathbb{Z}/p^m\mathbb{Z}(j)$ we get an isomorphism
$$\left(\xK/p^m(j)\right)_{\mathcal{G}_n}\to \left(A'_{K_n}/p^m(j)\right)_{\Delta},$$
since $\Gamma_n$ acts trivially on $\mathbb{Z}/p^m\mathbb{Z}(j)$. 
Finally, taking $\mathrm{Gal}(K_n/K)$-coinvariants we get an isomorphism
$$\left(\xK/p^m(j)\right)_{\mathcal{G}}\cong \left(A'_{K_n}/p^m(j)\right)_{\mathrm{Gal}(K_n/k)}=\left(A'_{k(\mu_{p^m}^{\otimes j})}/p^m(j)\right)_{\mathrm{Gal}(k(\mu_{p^m}^{\otimes j})/k)},$$
where the last equality comes from the fact that $\mathrm{Gal}(K_n/k(\mu_{p^m}^{\otimes j}))$ acts trivially on $\mathbb{Z}/p^m\mathbb{Z}(j)$ and has order coprime with $p$.
\end{proof}

\section{Examples}\label{examples}
In this section we will illustrate our results in the case of quadratic number fields and $p=3$. We further assume $i=2$ to recover the classical context. In other words (see the introduction for more details), we shall consider the exact sequence 
\begin{equation}\label{sekt}
0\to W\!K_2(k)\{3\}\to K_2(o_{k})\{3\}\to \underset{v\mid 3}{\tilde\oplus}\mu(k_v)\{3\}\to 0,
\end{equation}
where 
$$\underset{v\mid 3}{\tilde\oplus}\mu(k_v)\{3\}=\mathrm{ker}\left(\underset{v\mid 3}{\oplus}\mu(k_v)\{3\}\to \mu(k)\{3\}\right).$$
Using PARI \cite{PARI}, we will provide examples for both split and non-split cases of the above exact sequence. 

Let $k=\mathbb{Q}(\sqrt{\delta})$ be a quadratic field. As usual we set $K=k(\mu_3)$ and let $k'=\mathbb{Q}(\sqrt{-3\delta})$ denote the quadratic subfield of $K=k(\mu_3)$ which is different from $k$ and $\mathbb{Q}(\mu_3)$. Note that in this situation $n_0(K)=0$, \textit{i.e.} all the primes above $3$ in $K_\infty/K$ are totally ramified. Moreover, we have an isomorphism 
$$\Psi(K)(1)^\Delta\cong\Psi(k').$$ 

\begin{remark}\label{comune}
For a quadratic number field $k$, Schneider's isomorphism $W\!K_2(k)\{3\}\cong\xK\!\!(1)_\mathcal{G}=\xKj{-1}(1)_\Gamma,$ together with Nakayama's lemma, implies that $W\!K_2(k)\{3\}=0$ if and only if $X'_{k'_\infty}=0$, since $\xKj{-1}\cong X'_{k'_\infty}$. Moreover, if $\Psi(k')=0$, then there is an isomorphism $(X'_{k'_\infty})_\Gamma\cong A'_{k'}$. In particular, when $\Psi(k')=0$, $W\!K_2(k)\{3\}=0$ if and only if $A'_{k'}=0$. 
\end{remark}

It is easy to see that, if $k=\mathbb{Q}(\sqrt{\delta})$ with $\delta\in \mathbb{Z}$ square-free, then $\tilde\oplus_{v\mid 3}\mu(k_v)\{3\}$ is non trivial if and only if $\delta\equiv -3 \,\,(3^2)$ and $\delta\ne -3$. In fact in these circumstances there is only one prime $v$ above $3$ and $\mu(k_v)\{3\}$ has order $3$ while $\mu(k)\{3\}$ is trivial, so that $[K_2(o_{k})\{3\}:W\!K_2(k)\{3\}]=3$. 

Thus for the rest of this section we set $k=\mathbb{Q}(\sqrt{\delta})$, with $\delta\in D$ where
$$D=\{x\in\mathbb{Z}\setminus\{-3\}\,|\,\textrm{$x$ square-free, $x\equiv -3 \!\!\pmod{3^2}$}\}.$$ 

\begin{example}\label{splitcase}
Suppose that
\begin{equation}\label{reqsplit}
\Psi(k')=\mathrm{ker}((X'_{k'_\infty})_\Gamma\to A'_{k'})=0\quad \textrm{and}\quad A'_{k'}\ne 0.
\end{equation}
The first of the above conditions can be easily verified using the methods of \cite[Section 4.3]{LMN}. Then (\ref{sekt}) splits with $W\!K_2(k)\{3\}\ne0$, thanks to Corollary \ref{simplecase} and Remark \ref{comune}, and, since $\delta\in D$, we have 
$$K_2(o_k)\{3\}\cong W\!K_2(k)\{3\}\oplus\mathbb{Z}/3\mathbb{Z}$$
as abelian groups (an easy argument shows that the same holds with $k$ replaced by $K$). 

For positive $\delta$, combined with the Birch-Tate formula $\zeta_k(-1)=(\#K_2(o_k))/24$, the splitting of (\ref{sekt}) sometimes allows to completely determine the structures of $K_2(o_k)\{3\}$ and $W\!K_2(k)\{3\}$. For example, if $k=\mathbb{Q}(\sqrt{3\cdot 239})$, the Birch-Tate formula gives $\#K_2(o_k)\{3\}=3^2$. We also have $A'_{k'}\cong \mathbb{Z}/3\mathbb{Z}$, which implies that the $3$-rank of $K_2(o_k)$ is $2$, thanks to the Keune-Tate exact sequence (\cite[proof of Theorem 6.6]{Ke} or \cite[Theorem 6.2]{Ta2}). Finally $\Psi(k')=0$ and therefore we get
$$W\!K_2(k)\{3\}\cong\mathbb{Z}/3\mathbb{Z}\quad\textrm{and}\quad K_2(o_{k})\{3\}\cong \mathbb{Z}/3\mathbb{Z}\oplus \mathbb{Z}/3\mathbb{Z}.$$  

Based on the computation of the regulator $R_2(k)$ in Lichtenbaum's generalization of the Birch-Tate formula $\zeta_k(-1)=R_2(k)(\#K_2(o_k))/24$, Browkin and Gangl (\cite{BG}) predicted the structure of $K_2(o_k)$ when $k$ is an imaginary quadratic field whose discriminant has absolute value less than $5000$. We have applied our methods to the cases in the list of \cite{BG} and, as far as the $3$-parts are concerned, our results agree with those in their list. 
 
\end{example}

\begin{example}
If $\Psi(k')\ne 0$, we cannot apply Corollary \ref{simplecase}. Nevertheless, if  \begin{equation}\label{reqnsplit}
\Psi(k')=\mathrm{ker}((X'_{k'_\infty})_\Gamma\to A'_{k'})\ne0\quad \textrm{and}\quad A'_{k'}= 0,
\end{equation}
then (\ref{sekt}) does not split. Indeed, when $A'_{k'}=0$, the condition $\Psi(k')\ne 0$ is equivalent to $X'_{k'_\infty}\ne 0$. Therefore
\begin{equation}\label{wknt}
\mathrm{ker}\left(\left(\xK/3(1)\right)_{\mathcal{G}}\to \left(A'_{K}/3(1)\right)_{\Delta}\right)=\left(\xK/3(1)\right)_{\mathcal{G}}\cong X'_{k'_\infty}/3(1)\ne 0,
\end{equation}
which implies that (\ref{sekt}) does not split by Theorem \ref{criterion}. In fact, this can also be proved in the following way: if $A'_{k'}=0$, then the $3$-rank of $K_2(o_k)$ is $1$ (by the Keune-Tate exact sequence) and, since $W\!K_2(k)\{3\}\ne 0$ by (\ref{wknt}), this implies that (\ref{sekt}) does not split. Note that the first condition in (\ref{reqnsplit}) can be verified using the recipe of \cite[Section 4.3]{LMN}. If (\ref{reqnsplit}) holds, then, since $\delta\in D$, we have 
$$W\!K_2(k)\{3\}\cong\mathbb{Z}/3^a\mathbb{Z}\quad\textrm{and}\quad K_2(o_{k})\{3\}\cong \mathbb{Z}/3^{a+1}\mathbb{Z}$$
for some $a\in\mathbb{N}$.

As in the previous example in the totally real case we can use the information about the non splitting of (\ref{sekt}) to completely determine the structures of $K_2(o_k)\{3\}$ and $W\!K_2(k)\{3\}$. For example, when $k=\mathbb{Q}(\sqrt{3\cdot14})$,  the Birch-Tate formula gives $\#K_2(o_k)\{3\}=3^3$. We also have $A'_{k'}=0$, which implies that $K_2(o_k)\{3\}$ is cyclic. Finally $\Psi(k')\cong\mathbb{Z}/3\mathbb{Z}\ne 0$ and therefore we get
$$W\!K_2(k)\{3\}\cong\mathbb{Z}/3^2\mathbb{Z}\quad\textrm{and}\quad K_2(o_{k})\{3\}\cong \mathbb{Z}/3^3\mathbb{Z}.$$ 

In the imaginary case, the results given by our methods agree once more with the predictions of \cite{BG}, as far as the $3$-parts are concerned.

\end{example}

\begin{remark}
If $\Psi(k')\ne0$ and $A'_{k'}\ne0$, then the inclusion $W\!K_2(k)\{3\}\subseteq K_2(o_{k})\{3\}$  may or may not split, indeed both cases may occur. For instance, for $\delta=3\cdot 1409$ or $3\cdot 1658$, we have $\#\Psi(k')=\#A'_{k'}=3$. Moreover the Birch-Tate formula gives $\#K_2(o_{k})\{3\}=3^3$ and the Keune-Tate exact sequence implies that $K_2(o_{k})\{3\}$ has $3$-rank $2$ and its exponent must therefore be $3^2$.
In particular (\cite[Theorem 6.6]{Ke}), we have 
\begin{equation}\label{last}
W\!K_2(k)\{3\}\cong \left(A'_{K_1}\otimes\mu_9\right)_{\mathrm{Gal}(K_1/k)}.
\end{equation}
Now PARI tells us that the right-hand term of (\ref{last}) is isomorphic to $\mathbb{Z}/3\mathbb{Z}\times\mathbb{Z}/3\mathbb{Z}$ (resp. $\mathbb{Z}/3^2\mathbb{Z}$) when $\delta=3\cdot 1409$ (resp. $\delta=3\cdot 1658$). This implies that the inclusion $W\!K_2(k)\{3\}\subseteq K_2(o_{k})\{3\}$ does not split (resp. splits) when $\delta=3\cdot 1409$ (resp. $\delta=3\cdot 1658$).  
\end{remark}

\bigskip
\noindent
\textbf{Acknowledgments}

We are very grateful to Thong Nguyen Quang Do for several helpful exchanges during the preparation of this article. We would also like to thank Herbert Gangl for providing us with the program used to compute the table of \cite{BG}, along with useful suggestions on how to use it. Finally, we thank Filippo Nuccio for his remarks on a preliminary version.


\begin{tabularx}{\textwidth}{XX}
   Luca Caputo & Abbas Movahhedi\\
   Dipartimento di Matematica & Facult\'e de Sciences et Techniques -XLIM\\
   via Filippo Buonarroti 1/c & 123 Avenue Albert Thomas\\
   56127 Pisa & 87060 Limoges\\
   Italy & France\\
   caputo@mail.dm.unipi.it & mova@unilim.fr
\end{tabularx}

\end{document}